\documentclass[12pt, letter]{article}
\usepackage{amsthm}
\usepackage[margin =1in]{geometry}
\usepackage{palatino}
\usepackage{xcolor}
\usepackage[flushleft]{threeparttable}
\usepackage[width=\textwidth]{caption}
\usepackage{subcaption}
\usepackage[pdftex]{graphicx}
\usepackage[skip=0pt]{caption}
\captionsetup{labelsep=period}
\captionsetup[figure]{position=above}
\captionsetup[subfigure]{labelformat=empty}
\usepackage{setspace}
\usepackage{subcaption}
\usepackage{lscape}
\usepackage{multirow}
\usepackage{multibib}
\usepackage{algorithm}
\usepackage{algpseudocode}
\usepackage{array}
\usepackage[english]{babel}
\newtheorem{theorem}{Theorem}

\newtheorem{definition}{Definition}
\newtheorem{lemma}{Lemma}

\usepackage{titling}
\setlength{\droptitle}{-0.5in}

\newcites{SM}{Online Appendix References}
\usepackage[hidelinks]{hyperref}
\hypersetup{
     colorlinks   = false, 
     allcolors    = [RGB]{163,31,52}
}
\usepackage{epigraph}
\setlength\epigraphwidth{.8\textwidth}
\setlength\epigraphrule{0pt}
\usepackage{indentfirst}
\setlength{\parindent}{.8cm}
\setlength{\parskip}{0cm}
\usepackage[round]{natbib}
\usepackage{verbatim}
\usepackage[T1]{fontenc}
\usepackage[utf8x]{inputenc}
\usepackage[pdftex]{graphicx}
\usepackage{hologo}
\usepackage{textcomp}
\usepackage{hanging}
\usepackage{amsmath}
\usepackage{amssymb}
\usepackage{amsfonts}
\usepackage{tikz}
\usepackage{float}
\usepackage{epigraph}
\usepackage{authblk}
\interfootnotelinepenalty=10000
\pagenumbering{gobble}
\pagenumbering{arabic}

\makeatletter
\def\@fnsymbol#1{\ensuremath{\ifcase#1\or \dagger\or \ddagger\or
   \mathsection\or \mathparagraph\or \|\or **\or \dagger\dagger
   \or \ddagger\ddagger \else\@ctrerr\fi}}
    \makeatother

\newcommand{\tx}{\tilde{x}}
\newcommand{\ty}{\tilde{y}}
\newcommand{\tz}{\tilde{z}}
\newcommand{\dist}{\textbf{\text{dist}}}
\newcommand{\mF}{\mathcal{F}}
\newcommand{\pran}[1]{\left(#1\right)}

\begin{document}

\title{\Large Optimizing Scalable Targeted Marketing Policies with Constraints}

\author{Haihao Lu\thanks{Assistant Professor of Operations Management, University of Chicago, Booth School of Business, 5807 S Woodlawn Ave, Chicago, IL 60637, haihao.lu@chicagobooth.edu.} \hspace{1in} Duncan Simester\thanks{NTU Professor of Marketing, Massachusetts Institute of Technology, MIT Sloan School of Management, 100 Main Street, Cambridge, MA 02142, simester@mit.edu.} \hspace{1in} Yuting Zhu\thanks{Assistant Professor of Marketing, National University of Singapore, NUS Business School, 15 Kent Ridge Drive, Singapore, 119245, y.zhu@nus.edu.sg.} }

\footnotetext[1]{Author names are listed alphabetically. The authors received helpful comments from Baris Ata, Daria Dzyabura, Fred Feinberg, Robert M. Freund, Dennis Zhang, Juanjuan Zhang, and Spyros Zoumpoulis; from seminar participants at MIT and National University of Singapore; and from audiences at the 2022 Analytics for X Conference, 2022 Artificial Intelligence in Management Conference, 2022 Conference on Artificial Intelligence, Machine Learning, and Business Analytics, 2022 Marketing Science Conference, 2023 CMIC Conference, 2023 Marketing Analytics Symposium Conference, and 2023 POMS Conference. We would like to acknowledge that cloud resources involved in this research work are partially supported by the NUS Cloud Credits for Research Program.}

\date{December 19, 2023}

\maketitle

\vspace{-0.9cm}
\begin{abstract}
Targeted marketing policies target different customers with different marketing actions. While most research has focused on training targeting policies without managerial constraints, in practice, many firms face managerial constraints when implementing these policies. For example, firms may face volume constraints on the maximum or minimum number of actions they can take, or on the minimum acceptable outcomes for different customer segments. They may also face similarity (fairness) constraints that require similar actions with different groups of customers. Traditional optimization methods face challenges when solving problems with either many customers or many constraints. We show how recent advances in linear programming can be adapted to the targeting of marketing actions. We provide a theoretical guarantee comparing how the proposed algorithm scales compared to state-of-the-art benchmarks (primal simplex, dual simplex and barrier methods). We also extend existing guarantees on optimality and computation speed, by adapting them to accommodate the characteristics of targeting problems. We implement the proposed algorithm using data from a field experiment with over 2 million customers, and six different marketing actions (including a no action ``Control''). We use this application to evaluate the computation speed and range of problems the algorithm can solve, comparing it to benchmark methods. The findings confirm that the algorithm makes it feasible to train large-scale targeting problems that include volume and similarity constraints.

\singlespacing 
\noindent\emph{\textbf{Keywords}}: Targeting, personalization, linear programming, budget constraints, fairness

\end{abstract}

\newpage
\doublespacing

\section{Introduction} \label{sec:intro}
The growth in industry interest in personalization and targeting has been mirrored by rapid growth in academic interest.\footnote{We use the terms ``personalization'' and ``targeting'' interchangeably to describe policies that recommend different marketing actions for different customers.} This academic attention has primarily focused on optimizing targeting policies without constraints. However, in practice, internal business rules or society considerations often impose constraints on firms’ targeting policies. 

Managerial constraints on targeting policies are typically of two types. \textit{Volume} constraints mandate the maximum or minimum number of marketing actions that can be taken. This type of constraint may result from capacity constraints. For example, a firm’s ability to make outbound phone calls may be limited by the availability of trained associates to make these calls. Budget constraints may also impose minimum and/or maximum limits on the total number of marketing actions. For example, Neiman Marcus sends a holiday catalog to a selected sample of its customers each fall. The campaign is largely funded by suppliers, who mandate a minimum number of catalogs that can be sent, while budget constraints also impose a maximum limit. Similarly, the wholesale membership club that provided the data used in this paper, conducts separate spring and fall campaigns in which it sends membership offers to prospective customers. Business rules impose both minimum and maximum limits on the number of prospective customers included in each campaign. 

\textit{Similarity (Fairness)} constraints limit differences in marketing actions taken with different customer segments. These constraints are often motivated by concerns for fairness. For example, a constraint might require that the firm takes similar marketing actions with zip codes that have many versus few African American households, or that customers located near one store are treated with similar marketing actions as customers located near other stores.\footnote{Our focus does not extend to investigating the different definitions of fairness (see the discussion of this issue in Section \ref{sec:review}). We will interpret fairness concerns as firm concerns that similar customers receive similar marketing actions.}


In Table~\ref{table:targetingliterature}, we list eight recent marketing papers published after 2020 that study personalization problems, and summarize the number of customers used in each training dataset. These examples reveal that datasets used to train targeting policies can contain millions of customers. Millions of customers can lead to thousands of customer segments, which collectively can result in millions of decision variables, and large numbers of constraints. For example, volume constraints can quickly become numerous when they are applied to separate customer segments. A credit card firm that wants to offer personalized loan rates to prospective customers, may want to manage both overall credit risk, and credit risk within each tranche of customers (grouped according to credit scores). Similarly, some charities engage in ``laddering'', setting customer-specific guidelines when suggesting donation levels. Fairness constraints also become numerous when used to  compare marketing actions and/or outcomes between individual customer segments. For example, a firm may require that no zip code that contains many households in a protected class receives substantially fewer discounts on average than other zip codes. In Section \ref{sec:problem}, we will show that the number of constraints may grow as a polynomial function of the number of customers and number of segments. 
As a result, methods for designing personalization problems need to be scalable to meet these challenges (\citealt{aipersonalization23}). 


We formulate the problem of training a targeting policy as a linear programming problem with constraints, and illustrate how to incorporate both volume constraints and similarity constraints. The current state-of-the-art methods for solving linear programming (LP) problems are the simplex method (\citealt{bertsimas08, dantzig16}) and the barrier method (\citealt{karmarkar1984new,renegar1988polynomial,monteiro1989interior,wright1997primal}). Both methods are very mature, and are implemented in commercial software, such as Gurobi and CPLEX. They are capable of identifying reliable solutions for medium size problems. However, neither method is well-suited to solving personalization problems that have a large number of decision variables, or a large number of constraints. 


Instead, we adapt and apply an algorithm that leverages the Primal-Dual Hybrid Gradient (PDHG, see \citealt{phdg11}). PDHG methods only require matrix-vector multiplications, which allows these methods to easily scale. In contrast, simplex and barrier methods require solving linear equations using matrix factorization. This leads to two major challenges when solving large-scale problems: (1) while the original targeting problem may be sparse, the matrix factorization can be dense, requiring more memory; (2) it is very challenging (if not impossible) to use modern computing architectures, such as distributed system and GPUs, for matrix factorization. PDHG only requires storing the constraint matrix in memory, and sparse matrix-vector multiplication is easily scaled on modern computing architectures. In light of these fundamental distinctions, \cite{nesterov2013gradient} formally defines optimization methods that require matrix factorization (or linear equation solving) as handling medium-scale problems, and optimization methods that require matrix-vector multiplication as handling large-scale problems.
 
\cite{applegate21a}, \cite{applegate2021infeasibility} and \cite{applegate21b} recently extended PDHG methods to solve linear programming problems. We use a version of their algorithm, which they label ``Primal-Dual Hybrid Gradient for Linear Programming'' (PDLP). They provide theoretical guarantees on computation speed and optimality.  However, the theoretical results only apply to settings with one-sided constraint ($x \geq 0$). As we shall see, targeting problems by their nature include two-sided constraints ($0 \leq x \leq e$). As a result, the theoretical guarantees on performance and optimality established by \cite{applegate21b} no longer apply. We provide the first theoretical guarantee on optimality and performance of the PDLP algorithm for settings with two-sided constraints. We also provide the first documented implementation of a PDHG algorithm in marketing. The application demonstrates that PDHG methods have the potential to make an important contribution to optimizing targeting policies, by expanding the set of problems that can be solved. In particular, they can solve problems with more customers, and more constraints, than state-of-the-art benchmark methods. 

Specifically, we provide two new theoretical results. The first result compares the amount of computation required to solve targeting problems with constraints when using a PDHG algorithm, compared to primal simplex, dual simplex and the barrier method. We prove that one (worst-case) iteration of PDHG requires a lot less computation than the benchmarks. The result is the first formal comparison of the performance of PDHG and these benchmark methods. 
  
The second theoretical result extends the proof in \cite{applegate21b}. They show that the number of steps required to find an $\epsilon$-optimal solution for the PDLP algorithm is on the order of $O(\log(1/\epsilon))$, compared to order $O(1/\epsilon)$ for the standard PDHG algorithm. A theoretical requirement of $O(\log(1/\epsilon))$ iterations versus $O(1/\epsilon)$ iterations can result in substantially faster convergence. Extending this result to accommodate two-sided constraints requires non-trivial extensions of the \cite{applegate21b} result.

In our empirical application, we use PDLP to solve an actual targeting problem. A large United States wholesale membership club wanted to choose which promotions to send to prospective customers. The response functions are estimated using a large-scale field experiment that includes over 2 million customers and six marketing actions (one of which is a ``no action'' control). The experimental variation makes it straightforward to obtain causal estimates of the incremental profit earned from each marketing action. The findings both illustrate a practical implementation of the PDLP algorithm, and provide empirical evidence of how PDHG methods extend the range of solvable targeting problems in the presence of constraints. 


If firms are restricted to using benchmark methods, and yet they still want to satisfy a predetermined set of constraints, they may have to forgo individual personalization. Instead, they can simplify the problem by personalizing marketing actions at the segment level. Replacing individual decision variables with segment-level decision variables reduces the degrees of freedom, which lowers the expected performance of the policies. We can interpret the difference in expected performance between individual-level and segment-level personalization problems as a measure of the value of the proposed method over the benchmark methods. Therefore, we investigate this performance difference, and show that the ability to solve individual-level problems using the proposed method has important implications for firm profits.  

The paper continues in Section~\ref{sec:review}, where we position our contribution with respect to the existing literature. We explicitly model the firm's optimization problem in Section \ref{sec:problem}, and discuss how to incorporate different types of volume and similarity constraints. Section \ref{sec:algorithm} presents details of the algorithm, together with theoretical guarantees. We present an empirical application in Section \ref{sec:validation}, where we apply the algorithm to data from a large-scale field experiment. In Section \ref{sec:manaimplication}, we investigate the economic implications of our findings, by documenting the reduction in expected performance when firms can only personalize policies at the segment level. The paper concludes in Section \ref{sec:conclusion}, where we also highlight promising directions for future research. 

\section{Related Literature}\label{sec:review}
We draw from and contribute to literature on personalization and targeting, fairness, and PDHG optimization methods. We begin by briefly reviewing the literature on personalization and targeting.

\subsection{Personalization and Targeting of Marketing Actions} \label{subsec:lit on targeting}
Research on training targeting policies has grown rapidly in recent years, with most of the research (including this paper) adopting a predict-then-optimize approach. Intuitively, the firm first solves an ``estimation problem'', in which it estimates customer response functions from a sample of training data. After solving the estimation problem, the firm solves an ``optimization problem'', in which the estimated customer response functions are used to design a policy that optimizes an objective function (typically firm profit). The contributions of the two problems was recently investigated by \cite{feldman2022customer}. They compare the relative contribution of a state-of-the-art estimation method with a state-of-the-art optimization method. They demonstrate that sophisticated estimation may not out-perform a relatively simple estimation method that has been properly optimized.

In Table~\ref{table:targetingliterature}, we list eight examples of recent papers that are published after 2020 and investigate methods for targeting a range of different marketing actions. The focus of all of these papers is on the estimation problem, rather than the optimization problem. While some of the papers consider budget constraints, these constraints are simple in nature, and can be solved using rudimentary greedy algorithms. The inclusion of more complicated constraints, and an increase in either the number of decision variables or the number of constraints, require more sophisticated optimization algorithms. We focus on targeting problems with larger and more complicated sets of constraints. 

Personalization and targeting have also received recent interest outside the marketing literature, in both operations research and management. For example, \cite{golrezaei14} consider the problem of personalizing product assortments in a dynamic setting, and propose an index-based multi-armed bandit method. 
\cite{chendavid21} apply statistical learning to customize revenue management policies.  \cite{derakhshan21} also use linear programming to solve personalization problem, where they focus on personalized reserve prices. We refer interested readers to \cite{aipersonalization23} for a recent review of personalization and targeting within and beyond marketing. As far as we know, none of the papers consider the scalability of personalization problems in the presence of constraints. 

Beyond targeting, other research in marketing has investigated how to include constraints when optimizing marketing actions. Examples include research studying conjoint analysis (\citealt{toubia04}), product line design (\citealt{luo10}), retail assortments (\citealt{fisher14}), and content arrangements on social media (\citealt{kanuri18}).

\begin{landscape}
\begin{table}[H] 
\centering
\caption{Constraints in Recent Targeted Marketing Papers.} \vspace{.2in}
\begin{tabular}{ p{5.5cm} p{7.5cm} p{3.5cm} p{3cm} p{3cm}} 
 \hline\hline
 Paper & Problem & Constraint & Optimization Algorithm & \# of Customers (Training)\\
 \hline
 \cite{dmcmc20} & Solicitations for charity  & No Constraints & - & 1,088,310\\
 \\
 \cite{chunprofits20} & Proactive retention campaigns  & Budget Constraint & Greedy & 5,190/2,100\\
 \\
  \cite{prospective20} & Promotions to prospective customers & Budget Constraint & Greedy & 1,185,141 \\
  \\
   \cite{searchtargeting20} & Ranking for query-based search & No Constraints & - & 5,659,229 \\
   \\
   \cite{productassortment21} & Coupons for retailer customers & Budget Constraint & Greedy & 150,094\\
   \\
    \cite{longterm21} & Targeted discounts to retain customers  & No Constraints & - & 45,000 \\
   \\
   \cite{pricetargeting21} & Pricing for a digital firm & No Constraints & - &  7,867\\
   \\
   \cite{hema2023freetrials} & Free trial promotions & No Constraints & - & 337,724\\
 \hline\hline
\end{tabular}
\label{table:targetingliterature}
\end{table}
\end{landscape}

\subsection{Fairness of Marketing Actions} \label{subsec:lit on fairness}
Our findings highlight the challenge of optimizing firm actions in the presence of fairness constraints. There is a long history of studying the role of fairness in firm's marketing actions, and the use of algorithms to make marketing decisions has renewed interest in fairness as a research topic. A distinguishing feature of algorithm fairness is that unfair marketing actions are often an unintended outcome. Algorithms that are designed to optimize seemingly innocuous goals, can lead to unintended and unanticipated differences in how customers are treated. For example, \cite{lambrecht18} documents how an algorithm delivering advertisements promoting job opportunities led to unintended discrimination. Although the advertisement was designed to be gender neutral, the algorithm optimized cost effectiveness, which led to the ad being shown to more men than women. They conclude by showing that this result generalizes across different digital ad platforms. Similarly, \cite{zhang21} shows that the introduction of smart-pricing algorithms increased the gap between the revenue earned by black and white hosts on Airbnb. 

There have been several theoretical studies investigating the implications of fairness for firm's marketing decisions (see for example \citealt{cuiraju07}, \citealt{guo15}, \citealt{lijain16}, \citealt{guojiang16}, and \citealt{fukannan21}). The fairness of firm's pricing decisions has probably received the most attention (see for example  \citealt{wirtz07}, \citealt{campbell07}, \citealt{anderson08}, \citealt{bolton10} and \citealt{allender21}), while research on methods to anticipate or mitigate algorithmic fairness concerns in marketing is only now beginning to emerge. One notable example is \cite{ascarza22}, who propose using bias-eliminating adapted trees to adjust the potential bias in personalization policies. We contribute to this emerging literature by illustrating how to incorporate fairness concerns, and how to optimize these problems when the number of customers or constraints is large. 

Algorithm fairness has also generated considerable interest in the computer science and machine learning literature. This includes an ongoing debate about the definition of fairness. The definitions are many and varied (see for example \citealt{Castelnovo21}, and \citealt{mehrabi21}). It is beyond the scope of this paper to resolve the differences between these definitions. Instead, we will interpret a fairness constraint as an example of a \textit{Similarity} constraint (see the discussion in Section~\ref{sec:problem}). 

The computer science literature has also proposed different methods to identify and restrict discrimination in policies trained using algorithms. The methods can be classified into three types: pre-process, in-process, and post-process (see review papers such as \citealt{pessach21}). Our method belongs to the in-process class of methods, because we explicitly consider fairness as part of the policy training process. 

\subsection{PDHG Optimization Methods} \label{subsec:lit on PDHG}

The algorithm that we propose, leverages the primal-dual hybrid gradient (PDHG, see \citealt{phdg11}). PDHG methods have been widely used in image processing and computer vision applications (e.g., \citealt{zhuchan08}, \citealt{pock09}, \citealt{esser10}, and \citealt{heyuan12}). They are first-order methods, which use gradient information to construct algorithms to find optimal solutions. This class of methods scales very well, and first-order methods have been widely used in many applications, including many machine learning algorithms (\citealt{beck17}).  Recent developments have made the algorithm especially suitable for large-scale linear programming problems (see details in Section \ref{sec:algorithm}).

Within the PDHG class of methods, the algorithm we use falls within the Primal-Dual Hybrid Gradient for Linear Programming (PDLP) family of methods. These methods are very new, with the theoretical foundation described in \cite{applegate21b}. PDLP is a two-loop algorithm. By initiating a second loop, the algorithm reduces the risk that it moves away from the optimal solution before converging. 

\cite{applegate21b} provides theoretical guarantees for both optimality and computation speed. Implementation of the algorithm at scale requires several additional steps, which are described in \cite{applegate21a}. The research teams that authored these papers include highly skilled engineers and researchers at Google, who developed an efficient C++ implementation of the algorithm within the Google OR-Tools suite.\footnote{https://developers.google.com/optimization/lp.} 

As we discuss in the Introduction, we adapt the PDLP algorithm to accommodate the specific characteristics of targeting problems. Our theoretical findings include non-trivial extensions of the guarantees on convergence and optimality to accommodate these characteristics. Moreover, we provide a new theoretical result, comparing the computation requirements of the method with simplex and barrier methods. Finally, we provide the first documented empirical application of this general class of algorithms in the marketing domain.

In the next section, we discuss how to incorporate different types of constraints into personalization problems.

\section{Constraints and Problem Setup}\label{sec:problem}
In this section, we model targeting as a linear programming problem. We begin by describing the setup of the problem. We then discuss the interpretation of the objective function, followed by interpretation of the constraints.

\subsection{Problem Setup} \label{subsec:problem setup}

We assume there are $I$ considered customers and $J$ available marketing actions, where the set of marketing actions could include a "no action" or null treatment.\footnote{Examples of marketing actions include direct mail advertisements offering a "free trial" or a discount.} Each customer belongs to one of $K$ customer segments, where $K \ge 1$ (if $K$ = 1 then all customers belong to the same segment). We assume that customer segments are defined using observable contextual variables (such as gender, race, geographic locations, or past purchasing).

We formulate the firm's problem as follows:
\begin{align}
    \max_{x_i^j} & \ \ \ \sum_{i=1}^I \sum_{j=1}^J p_i^j x_i^j \nonumber \\
    \text{s.t.} &\ \ \ a_k^j \le \sum_{i\in S_k} x_i^j \le b_k^j, \  \text{ for } j=1,...,J, k=1,...,K \ \ \ \textbf{(Volume I)}   \nonumber \\
    & \ \ \  L_k \le \sum_{i\in S_k} \sum_{j=1}^J c^j_i x_i^j \le U_k, \ \text{ for } k=1,...,K \ \ \ \textbf{(Volume II)}  \nonumber \\
    & \ \ \  \frac{1}{n_{k_1}} \sum_{i\in S_{k_1}} x_i^j \le \lambda_j^{k_{1}k_{2}} \frac{1}{n_{k_2}} \sum_{i\in S_{k_2}} x_i^j + g_j^{k_{1}k_{2}}, \nonumber \\
    &\ \ \  \text{ for } j=1,...,J,  k_1=1,...,K, k_2=1,...,K, k_1\neq k_2 \ \ \ \textbf{(Similarity I)} \nonumber \\
     & \ \ \  \frac{1}{n_{k_1}} \sum_{i\in S_{k_1}}\sum_{j=1}^J d_i^j x_i^j \le \gamma^{k_{1}k_{2}} \frac{1}{n_{k_2}} \sum_{i\in S_{k_2}}\sum_{j=1}^J d_i^j x_i^j + h^{k_{1}k_{2}},  \nonumber \\
    &\ \ \  \text{ for } k_1=1,...,K, k_2=1,...,K, k_1\neq k_2  \ \ \ \textbf{(Similarity II)} \nonumber \\
    & \ \ \  \sum_{j=1}^J x_i^j \le M_i,  \ \text{ for } i=1,...,I  \ \ \ \textbf{(Targeting)}\nonumber \\  
    & \ \ \ 0 \le x_i^j \le 1 \ \ \ \textbf{(Feasibility)} \ .
    \label{eq:problem_setup}
\end{align}

The decision variable, $x_i^j\in [0,1]$, represents the probability a given customer $i$ receives marketing action $j$. While $x_i^j$ will normally take the value zero or one, a policy could be stochastic (rather than deterministic).

Problem \eqref{eq:problem_setup} is a linear program, thus it is a convex problem. Furthermore, the targeting and feasibility constraints guarantee that the problem has a bounded and compact feasible region. Therefore, it must have a finite and unique optimal objective value. It is possible to have multiple optimal solutions with the same optimal objective value.

For ease of exposition, we will refer to Problem \eqref{eq:problem_setup} as the ``Individual Personalization with Constraints'' problem, and will use the abbreviation ``IPwC'' throughout the rest of the paper. Notice that in the IPwC problem, the firm chooses marketing actions separately at the individual customer level. In contrast, the volume and similarity constraints are defined at the segment level. 

Defining constraints at the segment level is a natural interpretation for volume constraints, which by their nature, require aggregation across customers. For similarity constraints, the firm could in principle design constraints that compare marketing actions between individual customers. However, this is generally not what we observe in practice. Firms face a practical challenge when using similarity constraints to protect a disadvantaged class: the firm may not know which individual customers fall into the disadvantaged class. Few firms document individual characteristics such as race, color, religious creed, national origin, sexual orientation, or disability, which are often used to define protected classes under federal and state laws. In the absence of individual level data, firms can use zip code-level census information to design similarity constraints. For example, suppose a firm wants to impose the similarity constraint that African American households receive a similar number of discounts as other households. Rather than designing a constraint at the individual household level, it can require that zip codes with many African American households receive a similar proportion of discounts as other zip codes. Defining similarity constraints at the segment-level provides a natural solution when customer-level data is unavailable. 

We make two further remarks about choosing marketing actions at the individual customer level, while constraints are defined at the segment level. First, if we define each customer as a separate segment, this is equivalent to defining constraints at the individual customer level. The problem generalizes to include this special case. Second, in Section \ref{sec:manaimplication}, we consider an alternative formulation in which marketing actions are chosen at the segment level, instead of the individual-level (we label this the ``SPwC'' problem, or ``Segment Personalization with Constraints''). This is a simpler problem, and we will show that it can be solved by existing methods. We will interpret the difference in the profit earned from the IPwC and SPwC problems as a measure of the value contributed by the proposed algorithm. 

\subsection{Interpretation of the Objective Function} \label{subsec:obj function}
The firm's objective in~\eqref{eq:problem_setup} is to maximize the incremental profit it earns across all customers and all marketing actions. We denote the incremental profit that the firm earns from customer $i$ if it receives marketing action $j$ as $p_i^j$. Estimating the incremental response to marketing actions ($p_i^j$) has been the primary goal of many marketing studies, including many of the recent personalization papers listed in Table~\ref{table:targetingliterature}. We focus on how the firm uses these estimated incremental responses to identify an optimal personalization policy.\footnote{We discuss how we predict $p_i^j$ in our empirical application in Section \ref{sec:estimation}. We do note that we stay in the predict-then-optimize paradigm; the firm first predicts $p_i^j$, and then uses the predicted outcomes to generate optimized decision variables. It is interesting, but beyond the scope of our paper, to explore how to reconcile the misaligned objectives in prediction and optimization (e.g., \citealt{spo22}).} 

The incremental profit is calculated as the difference between: (a) the profit earned from customer $i$ if the customer receives marketing action $j$, and (b) the profit earned from customer $i$ if no action is taken. Alternatively, we could treat the null action as a separate action in itself, but we would then adjust how we measure outcomes. If we treated the null action as an action in itself, we would measure profits as the profit earned from each action, rather than the difference in profit compared to the null action.   

Without changing the model, we can alternatively redefine $p_i^j$ to measure increments of revenue, units sold, or some other managerially relevant observable outcome. We interpret a no action or null marketing action as a decision not to implement any of the $J$ actions ("business as usual"). The same logic applies to all constraints. To simplify the exposition, we will drop the "incremental" element in our explanation of the constraints.

\subsection{Interpretation of the Constraints} \label{subsec:constraints}
The first set of constraints $a_k^j \le \sum_{i\in S_k} x_i^j \le b_k^j$ (Volume I) represent volume constraints on each marketing action. Here, $S_k$ is the set of customers in segment $k$, $a_k^j$ is the lower bound for the total number of customers given marketing action $j$ in segment $k$, and $b_k^j$ is the upper bound for the total number of customers given marketing action $j$ in customer segment $k$. Volume constraints can include both a minimum or maximum requirement. For example, we earlier cited the example of the Neiman Marcus holiday catalog, where budget constraints impose an upper limit on how many customers receive the catalog, and agreements with suppliers (who fund the catalog) impose a minimum number of recipients. Notice that the framework also allows the minimum and maximum volume constraints to vary by customer segment ($k$) and marketing action ($j$); $a_k^j$ and $b_k^j$ can vary across $k$ and $j$. 

The second set of constraints $L_k \le \sum_{i\in S_k} \sum_{j=1}^J c^j_i x_i^j \le U_k$ (Volume II) are volume constraints across all marketing actions. Here, $L_k$ denotes the lower bound for a combination of all marketing actions in customer segment $k$, and $U_k$ denotes the upper bound for a combination of all marketing actions in customer segment $k$. The combination of the marketing actions is determined by parameter $c_i^j$. We allow the combination to differ across different customers. This is consistent with charities ``laddering'' their proposed donations, by setting customer-specific donation guidelines. We can also re-interpret this second set of constraints as constraints on marketing outcomes (e.g. the number of customers that respond), rather than constraints on marketing actions. The formulation of the constraints are unchanged under this alternative interpretation.

Volume constraints include several common constraints discussed in the literature. For example, a budget constraint often takes this form. In a budget constraint, $c_i^j$ denotes the costs for marketing action $j$ to customer $i$, $L_k$ is zero, and $U_k$ denotes the total budget number for customer segment $k$. It is possible that $c_i^j$ takes the same value across different customers $i$. Performance constraints may also take this form. Performance constraints impose requirements on a measurable outcome of the targeting policy. For example, although a firm’s objective may be to maximize profits (including the cost of the marketing actions), a manager's goals might include a requirement that this year's revenue is no lower than last year (\citealt{oyer98}). In this case, $c_i^j$ denotes the revenue if we give the customer $i$ marketing action $j$, $L_k$ is the performance requirement number for customer segment $k$, and $U_k$ is equal to infinity. 

The third set of constraints $ \sum_{i\in S_{k_1}} x_i^j/n_{k_1} \le \lambda_j^{k_{1}k_{2}}  \sum_{i\in S_{k_2}} x_i^j/n_{k_2} + g_j^{k_{1}k_{2}}$ (Similarity I) model the similarity constraints for each marketing action. These similarity constraints restrict the difference in the actions taken with different customer segments, and will often be motivated by concerns about fairness (see for example, \citealt{Castelnovo21}, \citealt{mehrabi21}). The total number of customers in segment $k$ is denoted by $n_k$, and $\lambda_j^{k_{1}k_{2}}$ and $g_j^{k_{1}k_{2}}$ restrict the difference between customer segments $k_{1}$ and $k_{2}$. Our framework can capture both subtraction and division differences. When $g_j^{k_{1}k_{2}}=0$, the constraint specifies that the division difference in the proportion of customers receiving a given marketing action between two customer segments cannot be larger than $\lambda_j^{k_{1}k_{2}}$. When $\lambda_j^{k_{1}k_{2}} = 1$, the constraint specifies that the subtraction difference in the proportion of customers receiving a given marketing action between two customer segments cannot be larger than $g_j^{k_{1}k_{2}}$. 


Recall our earlier example, in which a firm wants to impose a similarity constraint that African American households receive a similar number of discounts as non-African American households. We recognized that defining segments using zip codes, and then using census data to characterize the segments, provides a natural solution when customer-level data is unavailable. For illustration, assume a firm uses census data to identify zip codes with at least $X\%$ African American households. We can treat households in these zip codes as segment $k_1$, and the remaining households as segment $k_2$. The firm might require that the proportion of households that receive a discount cannot vary by more than 5\% between the two segments. We can then formulate this constraint as $(\sum_{i\in S_{k_1}} x_i^j/n_{k_1})/(\sum_{i\in S_{k_2}} x_i^j/n_{k_2}) \le 1.05$ and $(\sum_{i\in S_{k_2}} x_i^j/n_{k_2}) / (\sum_{i\in S_{k_1}} x_i^j/n_{k_1}) \le 1.05$. Alternatively, we could define each zip code as a separate segment, and require that the proportion of households receiving discounts in zip codes with many African Americans does not vary by more than 5\% from any zip code with few African Americans. 

We can easily include restrictions in which there is a minimum (instead of maximum) difference in the proportion of customers receiving a given marketing action. The setup also allows a firm to restrict the difference in the \textit{number} of customers who receive a marketing action in different marketing segments (instead of the \textit{proportion}).\footnote{For example, we might require that at least twice as many disadvantaged customers receive discounts as customers who are not disadvantaged. In this case, we can treat customers who are not disadvantaged as customer segment $k_1$, and disadvantaged customers as customer segment $k_2$. The formulation of this example is $\sum_{i\in S_{k_1}} x_i^j/\sum_{i\in S_{k_2}} x_i^j \le 1/2$ with $\lambda_j^{k_{1}k_{2}} = n_{k_1}/2n_{k_2}$ and $g_j^{k_{1}k_{2}}=0$.}

The fourth set of constraints $ \sum_{i\in S_{k_1}}\sum_{j=1}^J d_i^j x_i^j/n_{k_{1}} \le \gamma^{k_{1}k_{2}}  \sum_{i\in S_{k_2}}\sum_{j=1}^J d_i^j x_i^j/n_{k_{2}}+ h^{k_{1}k_{2}}$ (Similarity II) impose similarity constraints on all marketing actions (or marketing outcomes). The difference between customer segments $k_{1}$ and $k_{2}$ is restricted by $\gamma^{k_{1}k_{2}}$ and $h^{k_{1}k_{2}}$, and $d_i^j$ is the weighting factor to determine the combination of all marketing actions. For example, the firm might want to require that the budget allocated to disadvantaged customers is at least twice the budget allocated to customers who are not disadvantaged. In this example, we can again treat customers who are not disadvantaged as customer segment $k_1$, disadvantaged customers as customer segment $k_2$, and $d_i^j$ as the costs for marketing action $j$ to customer $i$ to derive the formulation. 

The last two constraints $\sum_{j=1}^J x_i^j \le M_i$ (Targeting) and $0\le x_i^j \le 1$ (Feasibility) restrict the firm's action space and represent the key characteristics of a personalization problem. Here, $M_i \in (0,J]$ denotes the maximum marketing actions that can be taken for customer $i$. We take a generous perspective, by recognizing that firms may want to send multiple marketing actions to the same customer.\footnote{When $M_i>1$, our framework implicitly assumes that the profit that the firm earns from customer $i$ if it receives marketing action $j$ is independent across different $j$, which might not hold in reality. In Appendix \ref{subsec:setup_another}, we show that a personalization problem with constraints can still be modeled as a linear programming problem even if we remove the independence assumption across different marketing actions. The algorithms and theoretical guarantees provided still hold.} Notice that marketing actions can cause negative treatment effects ($p_i^j$ < 0), but it is not possible to take the negative of a marketing action. As we will show in Section \ref{sec:algorithm}, this restriction on the firm's action space plays a key role in our adaptation of existing theoretical guarantees to personalization problems with constraints.   


\subsection{Summary} \label{subsec:constraints_summary}
With the IPwC problem formulated in Problem (\ref{eq:problem_setup}), the number of constraints is potentially large. However, in practice, some but not all of the constraints may be relevant. By specifying several different types of constraints, we aim to provide a menu of options for firms (and researchers) wanting to incorporate constraints into personalization policies. 

In Table \ref{table:constraints_summary}, we provide an example of each type of constraint. We also include a calculation of the number of individual constraints required for a single set of each type of constraint, where the set encompasses every combination of customers and segments (where applicable). In our application in Section~\ref{sec:validation}, we use a total of $J = 5$ marketing actions, $I=2,065,758$ customers, and up to $K=229$ segments (zip codes). In Table \ref{table:constraints_summary}, we calculate the number of individual constraints required both in general terms, and for this application.

\begin{table}[h]
\doublespacing
\footnotesize
\centering
\begin{threeparttable}
\caption{Number of Individual Constraints Required for Each Type of Constraint}\vspace{.1in}
\begin{tabular}{ p{3cm} p{7cm} c c} 
 \hline\hline
\textbf{Type of Constraint} & \textbf{Example} & \multicolumn{2}{c}{\textbf{Number of Constraints}} \\
& & \textbf{General Formula} & \textbf{Our Application}\\
  \hline
  Volume I & At least 10\% and no more than 30\% of customers receive a free trial promotion in each segment. & 2KJ & 2,290 \\
  Volume II & Minimum and maximum number of free trial and discount promotions received on average by households in each segment. & 2K & 458 \\
 Similarity I  &  The proportion of households receiving a discount promotion does not vary by more than 50\% across all of the segments. 	 & 	K(K-1)J & 261,060  \\ 
 Similarity II & The average number of promotions received by customers in  a segment does not vary by more than 50\% across the segments.  & K(K-1) & 52,212 \\
 Targeting & Customers can at most receive three promotions. & I & 2,065,758\\
  \textbf{Total} & - & - & \textbf{2,381,778}\\
 \hline\hline
\label{table:constraints_summary}
\end{tabular}
\begin{tablenotes}[para,flushleft]
\item \small \singlespacing \vspace{-.6in} Notes. The table reports the number of individual constraints required for each type of constraint, where we assume the individual constraints encompass every combination of customers and segments (where applicable). We report both the general formula and the specific number of constraints required for our empirical application (described in Section~\ref{sec:validation}). 
\end{tablenotes}
\end{threeparttable}
\end{table}

A complete set of constraints requires a total of $2,381,778$ individual constraints ($2JK + 2K + K(K-1)J + K(K-1) + I$), comprised of: $2,290$ Volume I constraints ($2JK$), $458$ Volume II constraints ($2K$), $261,060$ Similarity I constraints ($K(K-1)J$), $52,212$ Similarity II constraints ($K(K-1)$), and $2,065,758$ Targeting constraints ($I$). However, even this is not an upper bound on the number of constraints, as the firm might choose to use multiple versions of a constraint (e.g. Similarity II constraints on both race and gender). In our empirical application (Section \ref{sec:validation}), we show that the number of constraints can pose a challenge for benchmark methods, while our proposed algorithm scales well. We also investigate which types of constraints are particularly challenging. 

Before describing these findings, we first formally present the optimization algorithm, and provide guarantees on feasibility, performance and computation speed.

\section{Algorithm and Theoretical Guarantees}\label{sec:algorithm}

In this section, we present the central ideas in our proposed algorithm, and then provide theoretical guarantees on the performance of the algorithm when applied to IPwC problems.

The method we utilize is PDLP~\citep{applegate21a}, a new first-order method for solving linear programming problems. As we discussed earlier, the key reason for the scalability of our method is that the computational bottleneck of PDLP is matrix-vector multiplication, which can take advantage of modern computing architectures, such as GPUs and distributed computing, and only requires storing the constraint matrix in memory. On the other hand, state-of-the-art benchmark methods, such as the primal simplex, dual simplex and barrier methods, require solving linear equations using matrix factorization. Next, we will describe the key operations in the algorithm to illustrate how only matrix-vector multiplication is required. 


Suppose $W$ represents $I\times J$. We reorganize Problem \eqref{eq:problem_setup} to a general version:
\begin{align}
    \min_{x\in R^{W}} & \ \ \ p^T x \nonumber \\
     \text{s.t.} & \ Gx \le h \nonumber \\
     & 0 \le x \le e, 
     \label{eq:problem_primal}
\end{align}
where we stack all decision variables $x_{i}^{j}$ into a vector $x \in R^{W}$, all negative incremental profit $-p_i^j$ into a vector $p \in R^{W}$, and use $e$ to represent the ``all-one'' vector in $R^{W}$. All of our constraints are inequality constraints, and we can organize them into matrix form $Gx\le h$, where matrix $G \in R^{L \times W}$ and $h \in R^{L}$. Here, $L$ indicates the number of constraints. 

By dualizing the constraint $G x \le h$, we obtain the primal-dual form of the problem:
\begin{equation}\label{eq:problem_primal_dual}
   \min_{x\in X} \max_{y\in Y} \ p^T x - h^Ty + y^T Gx,
\end{equation}
where $X=\{x\in R^{W}: 0\le x \le e\}$ and $Y = \{y\in R^{L}: y \ge 0\}$. Furthermore, by maximizing the primal variable $x$ over set $X$, we obtain the dual problem:
\begin{align}
    \max_{y\ge 0} \ & \sum_{w=1}^{W} (p_w+G_w^T y)^+ - h^T y ,
     \label{eq:problem_dual}
\end{align}
where $^+$ refers to the positive part of a scalar.
By duality theory, we know that the optimal solution to the primal-dual problem \eqref{eq:problem_primal_dual} gives us an optimal solution to the primal problem \eqref{eq:problem_primal} and an optimal solution to the dual problem \eqref{eq:problem_dual}. In order to avoid projection onto the potentially complicated polytope constraint, we study how to solve the primal-dual formulation~\eqref{eq:problem_primal_dual}. 

To solve problem \eqref{eq:problem_primal_dual}, the base algorithm we utilize is the Primal-Dual Hybrid Gradient (PDHG) (\citealt{phdg11}). PDHG is an iterated method. It initializes with a primal-dual solution pair, and keep updating this primal-dual solution pair with the following rule until a high-quality primal-dual solution pair is obtained:
\begin{align}
     x^{new} &= \text{proj}_{X} (x^{old} - \eta p - \eta G^T y^{old}) \nonumber\\
     y^{new} &= \text{proj}_{Y} (y^{old}-\tau h + \tau G(2x^{new}-x^{old})) \ ,
      \label{eq:pdhg_gradient}
\end{align}
where $x^{old}$ and $y^{old}$ are the previous primal and dual solution respectively, $x^{new}$ and $y^{new}$ are the next primal and dual solution respectively. In Equation (\ref{eq:pdhg_gradient}),
$\text{proj}_{X}(\cdot)$ denotes the projection from $(\cdot)$ to the set $X$, and $\text{proj}_{Y}(\cdot)$ denotes the projection from $(\cdot)$ to the set $Y$. $\eta, \tau>0$ are two parameters of the algorithm, which are called primal step-size and dual step-size respectively\footnote{PDLP selects the step-size $\eta$ and $\tau$ adaptively (see Section 3.1 in \citealt{applegate21a} for more details).}. As we can see from the update rule (\ref{eq:pdhg_gradient}), the iterations are "matrix-free" in the sense that we only require matrix-vector multiplications of the data matrix, without the need to solve any linear equations. 


We utilize PDLP, which is a variant of PDHG that is designed to solve linear programming problems (\citealt{applegate21a}, \citealt{applegate2021infeasibility} and \citealt{applegate21b}). The algorithm, which is formally defined in Algorithm \ref{alg:twoloop}, is a two-loop algorithm. We use $n$ to denote the outer loop counter and $t$ to denote the inner loop counter. We initialize the algorithm from an arbitrary primal-dual solution pair $(x^{0,0}, y^{0,0})$. Suppose we are now at the $n$-th outer loop. In the $t$-th inner loop, 
we run one iteration of PDHG update to obtain the next step, i.e., $(x^{n,t+1},y^{n,t+1}) \gets PDHG((x^{n,t},y^{n,t}))$. More formally, it refers to obtaining a new primal-dual solution pair $(x^{n,t+1},y^{n,t+1})$ from the old primal-dual solution pair $(x^{n,t},y^{n,t})$ using the PDHG update rule~\eqref{eq:pdhg_gradient}.  We then compute the average of the primal and the dual sequence in the $n$-th outer iteration, namely $\bar{x}^{n,t+1} \gets \sum^{t+1}_{i=1}x^{n,i}/(t+1)$, $\bar{y}^{n,t+1} \gets \sum^{t+1}_{i=1}y^{n,i}/(t+1)$. We keep running the inner iterations until the normalized duality gap (formally defined in Appendix \ref{eq:rhorz-pos}) contracts by a constant factor. We then move to the next outer loop, ($n \gets n+1$), where the initial solution is the average solution in the previous outer loop: $(\bar{x}^{n+1,0}, \bar{y}^{n+1,0}) \gets (\bar{x}^{n,t}, \bar{y}^{n,t})$.

 
\medskip
\begin{algorithm}
\caption{The PDLP Algorithm}
\label{alg:twoloop}
\begin{algorithmic}
\State \textbf{Input}: Initialize the outer loop: $n \gets 0$; Initialize a primal-dual solution pair $(x^{0,0},y^{0,0})$ ;
\While{}
\State Initialize the inner loop: $t \gets 0$;
\While{}
\State $(x^{n,t+1},y^{n,t+1}) \gets PDHG((x^{n,t},y^{n,t}))$;
\State $\bar{x}^{n,t+1} \gets \sum^{t+1}_{i=1}x^{n,i}/(t+1)$, $\bar{y}^{n,t+1} \gets \sum^{t+1}_{i=1}y^{n,i}/(t+1)$;
\State $t \gets t+1$;
\EndWhile \ normalized duality gap decay condition holds;
\State $\bar{x}^{n+1,0} \gets \bar{x}^{n,t}, \bar{y}^{n+1,0} \gets \bar{y}^{n,t}$;
\State $n \gets n+1$;
\EndWhile \ $(x^{n,0},y^{n,0})$ satisfies the termination criteria\footnotemark.
\end{algorithmic}
\end{algorithm}



\footnotetext{PDLP terminates with an approximately optimal solution, which has a small relative KKT error (see Section 4.1 in \citealt{applegate21a} for additional details).}
In Figure \ref{fig:intuition}, we offer a simple example $\min_x \max_y xy$ to illustrate why two-loop iterations can help with convergence. The grey line shows the convergence path for one-loop iteration using Equation \eqref{eq:pdhg_gradient}, and the black line shows the convergence path for two-loop iterations documented in Algorithm \ref{alg:twoloop}.\footnote{To be specific, each point of both grey and black lines is the average of all past iterates. Notice that when our algorithm stops the inner loop and starts a new outer loop, we take the average for the new outer loop. We show the average of past iterates to show that the key difference between the two algorithms is from the restart scheme.} The starting point is $(5,5)$ and the optimal solution is $(0,0)$.

\begin{figure}[h]
\begin{center}
\caption{Iterates of the one-loop and the two-loop PDHG}
\includegraphics[width=0.7\textwidth]{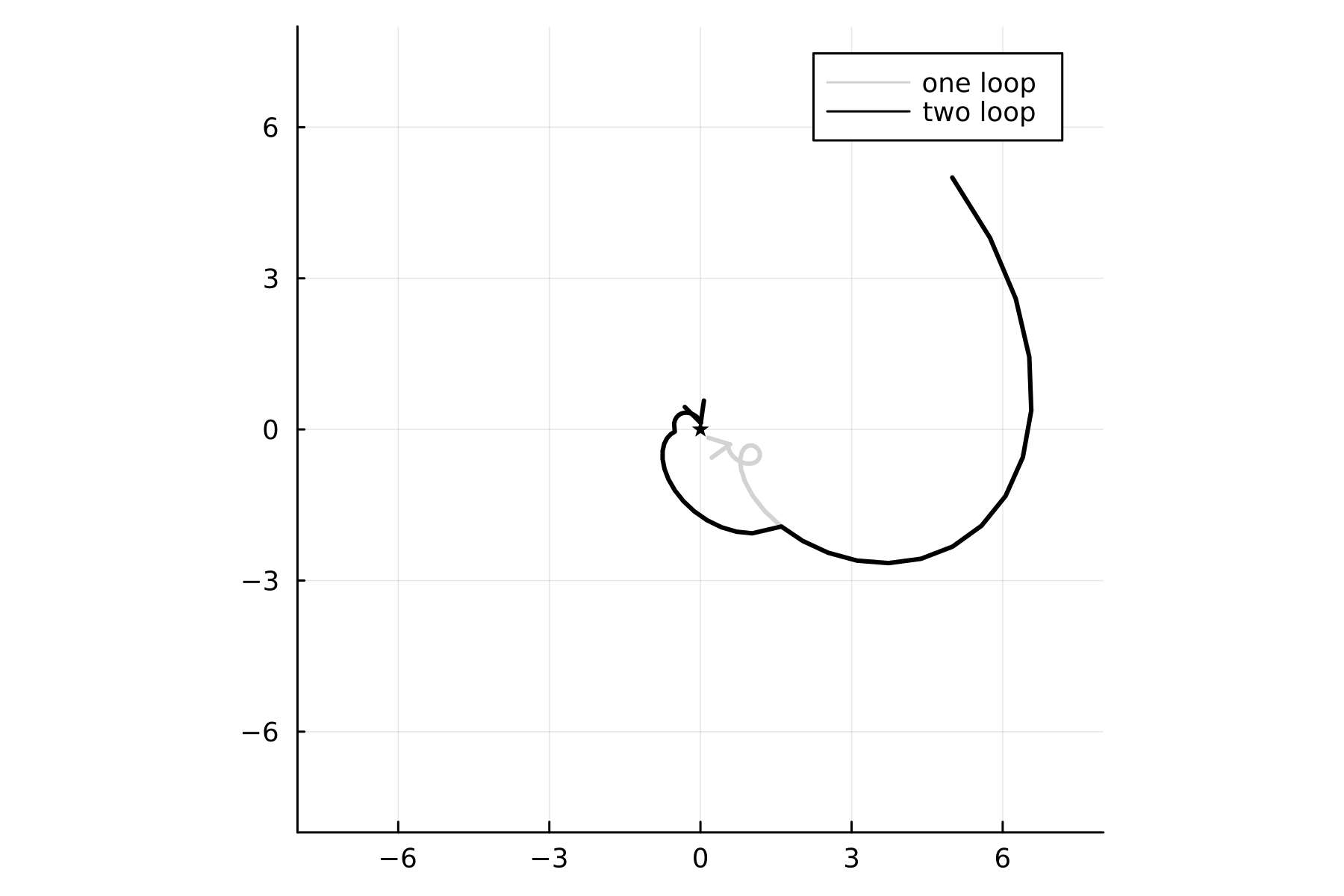}
\label{fig:intuition}
\end{center}
\centerline{\begin{minipage}{.75\textwidth}
\vspace{0.1cm} 
\small  Notes: The figure illustrates the iterates of the one-loop and two-loop PDHG for solving an illustrative problem: $\min_x \max_y xy$ with starting point $(5,5)$ and optimal solution $(0,0)$. 
\end{minipage}}
\end{figure}

The iterates of one loop algorithm (PDHG) spiral in and converge to the optimal solution (the grey line). While the one-loop algorithm can eventually converge to the optimal solution, the performance of the solution may fluctuate.  \cite{phdg16} formally prove that the average iterate converges to the optimal solution with complexity $O(1/\epsilon)$. This implies that the one-loop algorithm requires $O(1/\epsilon)$ steps to find an $\epsilon$-optimal solution. 

Intuitively, the average iterate is close to the optimal solution when one spiral finishes, due to the spiral-in structure, and then the average iterate may move away from the center of the spiral (the optimal solution). The two loop algorithm (PDLP) aims to restart once a spiral finishes (the black line). At the restarting time (the discontinuous points on the black line), the average iterate is close to the optimal solution. The algorithm restarts from this average solution, avoiding moving far-away from the optimal solution as in the one-loop algorithm. In Theorem \ref{thm:twoloop}, we formally prove that for the general LP of Form \eqref{eq:problem_primal}, the two loop algorithm has $O(\log(1/\epsilon))$ complexity.


We next present two theoretical results.
Theorem \ref{thm:largescale} states that one iteration of the proposed algorithm requires less computation when solving larger constrained personalization problems of Form \eqref{eq:problem_setup} than the simplex and barrier methods. Theorem \ref{thm:twoloop} presents the number of iterations of Algorithm \ref{alg:twoloop} that is needed to find an $\varepsilon$-close solution to \eqref{eq:problem_primal_dual}. These two results provide a theoretical foundation for using Algorithm \ref{alg:twoloop} to solve IPwC problems. 

In Theorem \ref{thm:largescale}, we use the notation ``$nnz$'' to describe the number of non-zeros in the constraint matrix $G$.

\begin{theorem}
    
\label{thm:largescale}

In the worst case scenario Algorithm \ref{alg:twoloop} requires $O(nnz)$ floating point operations per iteration, while one major iteration in the simplex and barrier methods requires $O(\min\{L^3, W^3\})$ floating point operations. 
\end{theorem}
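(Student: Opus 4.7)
The plan is to bound the per-iteration work of Algorithm \ref{alg:twoloop} from above by $O(nnz)$ and the per-iteration work of the simplex and barrier methods from below by $\Omega(\min\{L^3,W^3\})$ in the worst case, then put the two bounds together.

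For the upper bound on Algorithm \ref{alg:twoloop}, I would walk through the PDHG update rule~\eqref{eq:pdhg_gradient} line by line. The only nontrivial operations are the two matrix-vector products $G^{T} y^{old}$ and $G(2x^{new}-x^{old})$, each of which costs $O(nnz)$ floating point operations when $G$ is stored in sparse format. All the remaining steps -- scalar-vector scalings by $\eta$ and $\tau$, vector additions, the projection $\mathrm{proj}_{X}$ onto the box $[0,e]^{W}$, and the projection $\mathrm{proj}_{Y}$ onto the nonnegative orthant in $\mathbb{R}^{L}$ -- are coordinatewise and cost $O(W+L)$. Because $G$ has $L$ rows and $W$ columns, any meaningful constraint matrix satisfies $nnz \ge \max\{L,W\}$ (up to the trivial case of all-zero rows/columns, which can be pre-processed away), so the PDHG step is $O(nnz)$. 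The bookkeeping in Algorithm \ref{alg:twoloop}, namely the running averages $\bar{x}^{n,t+1}$ and $\bar{y}^{n,t+1}$ and the evaluation of the normalized duality gap decay test, is also $O(W+L)$ per inner iteration when implemented incrementally. Summing everything gives $O(nnz)$ per iteration.

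For the lower bound on the benchmarks, the argument is standard and I would just recall it. A major iteration of the primal (resp.\ dual) simplex method consists of selecting an entering (resp.\ leaving) variable and updating the basis; in both cases this requires solving a linear system of the form $B z = r$ where $B$ is the current basis matrix, which is square of dimension $\min\{L,W\}$. In the worst case (after inevitable refactorizations, and in particular at initialization or whenever the LU factors grow unacceptably large due to fill-in) this factorization costs $\Theta(\min\{L,W\}^{3})$ operations. A single iteration of an interior-point/barrier method is dominated by the Newton step, which requires forming and factorizing a normal-equation (or augmented KKT) matrix of dimension $\Theta(\min\{L,W\})$; this Cholesky/LDL factorization again costs $\Theta(\min\{L,W\}^{3})$ operations in the worst case due to fill-in, even when $G$ itself is sparse. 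Since $\min\{L,W\}^{3}=\min\{L^{3},W^{3}\}$, the worst-case per-iteration cost of both benchmarks is $\Omega(\min\{L^{3},W^{3}\})$.

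Putting the two bounds side by side yields the statement of Theorem \ref{thm:largescale}. The step I expect to require the most care is the benchmark lower bound: I need to articulate precisely what counts as a ``major iteration'' (a basis update for simplex; a Newton step for the barrier method), and to justify the worst-case cubic cost even though sparsity of $G$ can in favorable cases drastically reduce the cost of the factorization. The key point to emphasize is that the fill-in of the factors is what makes matrix factorization fundamentally unsuitable for scaling, in contrast to the matrix-vector multiplication inside PDLP which preserves the sparsity pattern of $G$ exactly and therefore always costs $O(nnz)$.
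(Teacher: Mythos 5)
Your proposal is correct and follows essentially the same route as the paper: the PDHG update is dominated by the two sparse matrix--vector products with $G$ and $G^{T}$, each costing $O(nnz)$, while a major simplex iteration is dominated by an LU factorization of the basis matrix and a barrier iteration by a Cholesky/LDL factorization of the Newton system, both cubic in the relevant dimension in the worst case. Your additional bookkeeping (projections, running averages, the $nnz \ge \max\{L,W\}$ observation, and the explicit framing of the benchmark cost as a worst-case lower bound due to fill-in) only elaborates the same argument.
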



\onehalfspacing
\begin{proof}
\setlength{\leftskip}{0.5cm}
In algorithm 1, the major cost per iteration is the two matrix-vector multiplications in the PDHG update in Equation \eqref{eq:pdhg_gradient}, and each matrix-vector multiplication requires $nnz$ floating point operations and $nnz \le L*W$.

\medskip
\noindent The typical implementation of the simplex method utilizes the revised simplex method. The computational bottleneck of the revised simplex method is that the linear equation solves on a basis matrix in each major iteration. Since the basis matrix is not symmetric, the linear equation solving is usually by a (sparse) LU decomposition, which requires $O(L^3)$ or $O(W^3)$ floating point operations in the worst scenario, depending on whether we use primal simplex or dual simplex.

\medskip
\noindent Every barrier method iteration requires solving a symmetric linear equation, which is the computational bottleneck of the barrier method.  This step is usually performed by a (sparse) Cholesky decomposition, which requires $O((L+W)^3)$ floating point operations in the worst scenario.


\end{proof}
\setlength{\leftskip}{0cm}
\doublespacing

Theorem \ref{thm:largescale} proves that for large-scale IPwC problems, a step in Algorithm \ref{alg:twoloop} is typically much cheaper than a step of the barrier method, or a major step in the simplex method, by noticing $O(nnz)\ll O(\min\{L^3, W^3\})$. Here, ``large-scale" can refer to the number of customers ($I$), the number of customer segments ($K$) and/or the number of marketing actions ($J$). The only condition we require is that the number of customers is  large relative to the number of segments and the number of marketing actions, which will essentially always be true for IPwC problems (see our empirical application in Section \ref{sec:validation}). 

In Theorem \ref{thm:twoloop}, we show that Algorithm \ref{alg:twoloop} can achieve global linear convergence for IPwC problems. 

\begin{theorem}\label{thm:twoloop}
 Algorithm \ref{alg:twoloop} can achieve global linear convergence of the problem in Equation (\ref{eq:problem_setup}). Specifically, Algorithm \ref{alg:twoloop} requires at most $O(\log(\frac{1}{\varepsilon}))$ number of iterations to find an $\varepsilon$-approximate solution\footnote{Similar to the barrier method, PDHG and PDLP asymptotically find an optimal solution. This is different from the simplex method, which finds an optimal solution in finite (though perhaps exponentially many) iterations.} to \eqref{eq:problem_primal_dual} such that the distance between this solution to an optimal solution is at most $\varepsilon$.
\end{theorem}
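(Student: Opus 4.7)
The plan is to adapt the convergence analysis of \cite{applegate21b} for PDLP to the box constraint $X = \{x : 0 \le x \le e\}$ on the primal variable, rather than the one-sided constraint $X = \{x : x \ge 0\}$ for which their original result was proved. The strategy rests on three ingredients: (i) a sharpness (Hoffman-type error bound) relating the normalized duality gap $\rho_r(z)$ of the saddle-point problem \eqref{eq:problem_primal_dual} to the distance from $z$ to the optimal set $Z^*$; (ii) the $O(1/t)$ ergodic convergence of $\rho_r$ along PDHG iterates on bilinear saddle-point problems with bounded primal set; and (iii) a restart argument showing that each outer loop of Algorithm \ref{alg:twoloop} contracts $\rho_r$ by a constant factor, so that $O(\log(1/\varepsilon))$ outer loops suffice.

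The central technical step, and the main obstacle, is extending the sharpness bound to the two-sided case. In the one-sided setting, $Z^*$ is cut out by the KKT system of the LP and \cite{applegate21b} invoke Hoffman's lemma to obtain $\mathrm{dist}(z, Z^*) \le \alpha\,\rho_r(z)$. With the box constraint, the KKT system acquires the additional inequalities $x \le e$ together with their associated multipliers and complementarity conditions. One route is to reformulate \eqref{eq:problem_primal} as a one-sided LP by introducing slacks $s = e - x \ge 0$, appending $s + x = e$ to the constraint matrix, and invoking the existing sharpness result directly. A cleaner route is to keep the box formulation and apply Hoffman's lemma directly to the enlarged polyhedral system $\{(x,y) : 0 \le x \le e,\ Gx \le h,\ y \ge 0,\ p^T x - h^T y + y^T G x = v^*\}$, where $v^*$ is the optimal value of \eqref{eq:problem_primal_dual}. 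Either route yields a problem-dependent constant $\alpha$, now depending on the box bounds and on the Hoffman constant of the enlarged system, for which $\mathrm{dist}(z, Z^*) \le \alpha\,\rho_r(z)$ on a neighborhood of $Z^*$.

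Given sharpness, the remainder of the argument follows the \cite{applegate21b} template with cosmetic changes. The averaged inner iterates $(\bar x^{n,t}, \bar y^{n,t})$ satisfy $\rho_r(\bar z^{n,t}) \le C/t$ for a constant $C$ depending on the initial distance to $Z^*$ and on $\|G\|$; this bound is actually easier to obtain than in the one-sided case, because boundedness of $X$ removes the need to localize the primal variable and keeps the $\mathrm{proj}_X$ step in \eqref{eq:pdhg_gradient} nonexpansive on a compact set. Combining the $O(1/t)$ bound with sharpness shows that after $t = O(\alpha \|G\|)$ PDHG inner iterations the normalized duality gap drops by at least a factor of two, which is exactly the decay condition that triggers a restart in Algorithm \ref{alg:twoloop}. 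Hence each outer loop uses $O(1)$ inner iterations (with constants absorbing sharpness and scaling factors), and after $N$ outer loops the gap is at most $2^{-N}$ times its initial value. Choosing $N = O(\log(1/\varepsilon))$ and converting the gap bound back into a distance-to-optimum bound via sharpness delivers the claim.

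The main difficulty I anticipate is showing that the sharpness constant $\alpha$ depends only on the LP data $(p, G, h)$ and the box bounds, and does not degrade qualitatively relative to the one-sided analysis. This reduces to a careful accounting of Hoffman's constant for the enlarged polyhedral KKT system; the argument is technical but tractable, since all the new inequalities $x \le e$ are affine and add at most $W$ rows to the system. A secondary point is checking that the $O(1/t)$ ergodic rate for $\rho_r$ carries over verbatim under the two-sided projection, which follows from the same telescoping regret argument used in the one-sided proof because $\mathrm{proj}_X$ is still a projection onto a closed convex set.
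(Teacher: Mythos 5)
Your overall scaffolding matches the paper's: establish sharpness of the primal--dual problem \eqref{eq:problem_primal_dual}, combine it with the $O(1/t)$ ergodic decay of the normalized duality gap, and use the restart condition to get a constant-factor contraction per outer loop, hence $O(\log(1/\varepsilon))$ iterations. The gap is in the one step you correctly identify as central: neither of your two proposed routes to the sharpness bound works, and the paper explicitly takes a different path precisely because of this. Your first route (introduce slacks $s = e - x$ and invoke the one-sided sharpness result) proves sharpness of a \emph{different} saddle-point formulation, with extra primal variables and extra dual multipliers for $s + x = e$; the normalized duality gap $\rho_r$ in Algorithm \ref{alg:twoloop} is defined relative to \eqref{eq:problem_primal_dual} with the box kept inside the projection step, and sharpness does not transfer across reformulations (the paper notes in Section \ref{sec:algorithm} that treating $x \le e$ as a linear constraint is exactly what PDLP does \emph{not} do). Your second route (Hoffman's lemma on the ``enlarged polyhedral system'') founders on two points: the system you write down contains the bilinear term $y^T G x = v^*$, so it is not polyhedral and Hoffman's lemma does not apply to it as stated; and, more fundamentally, once the box is absorbed into $X$ the dual problem \eqref{eq:problem_dual} becomes an unconstrained minimization of a piecewise-linear function involving $(p + G^T y)^+$, so the KKT system that \cite{applegate21b} use to tie $\rho_r$ to a linear residual is no longer valid. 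The paper states that extending the Hoffman-constant analysis to this setting is ``very challenging (if not impossible)''---the difficulty is not a matter of adding $W$ affine rows.

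What the paper does instead (Theorem \ref{prop:sharp} and Lemmas \ref{lem:gamma}--\ref{lem:beta}) is a direct argument exploiting piecewise linearity: (i) the feasibility residual $\|(Gx-h)^+\|_1$ restricted to the box is piecewise linear, hence sharp with some constant $\gamma$ relative to $\dist(x,\mF)$; (ii) an exact-penalty argument shows that for $r > \|p\|_1/\gamma$ the penalized objective $p^T x + r\|(Gx-h)^+\|_1$ has the same minimizers as the primal LP and, being piecewise linear, is sharp around $X^*$; (iii) the dual function $D(y) = \|(p+G^Ty)^+\|_1 - h^Ty$ is likewise piecewise linear and sharp around $Y^*$; and (iv) for $r \ge \max\{\gamma\|p\|_1, R, 1\}$ the inner max and min in $\rho_r(z)$ can be evaluated in closed form (the optimizers sit at corners of the ball intersected with $Z$), which lower-bounds $r\rho_r(z)$ by the sum of the primal penalty gap and the dual gap; monotonicity of $\rho_r$ in $r$ then covers small $r$. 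You would need to supply an argument of this kind (or repair one of your routes) before the rest of your proof can proceed; as written, the claim ``either route yields a problem-dependent constant $\alpha$'' is asserted rather than established, and it is exactly the point where the one-sided analysis breaks.
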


\cite{phdg16} establish that the standard one loop PDHG algorithm requires an order of $O(\frac{1}{\varepsilon})$ iterations to find an $\epsilon$-optimal solution. More recently, \cite{applegate21b} prove that the order of iterations required for the two-loop PDLP algorithm is only $O(\log(\frac{1}{\varepsilon}))$. However, the \cite{applegate21b} results only apply to linear programming problems with one-sided constraints, while the Feasibility constraints in Problem \ref{eq:problem_setup} are two-sided constraints: $0 \le x_i^j \le 1$.\footnote{Or equivalently, $0\le x \le e$ in Problem \eqref{eq:problem_primal}.}  Therefore, the \cite{applegate21b} results do not apply. In Theorem \ref{thm:twoloop}, we extend the \cite{applegate21b} results to the case of two-sided constraints. In particular, we show that PDLP with two-sided constraints converges to an $\epsilon$-optimal solution in at most $O(\log(\frac{1}{\varepsilon}))$ iterations. The difference between a theoretical requirement of $O(\log(\frac{1}{\varepsilon}))$ and $O(\frac{1}{\varepsilon})$ iterations indicates a substantial improvement in computation speed. 

In principle, we could treat $x_i^j\le 1$ as a linear constraint, instead of a variable bound. We could then use the machinery developed in \cite{applegate21b} to obtain a $O(\log(\frac{1}{\varepsilon}))$ theoretical rate. However, this is not what is implemented in PDLP. Moreover, treating $x_i^j\le 1$ as a linear constraint would introduce redundant dual variables (associated with these constraints), which could significantly slow down the convergence of the algorithm.

While the logic in the proof of Theorem \ref{thm:twoloop} still follows from \cite{applegate21b}, the main difficulty is to show that the corresponding primal-dual formulation \eqref{eq:problem_primal_dual} is sharp when the problem includes two-sided constraints (see definition in Appendix \ref{subsec:largescale_proof}). It is very challenging (if not impossible) to extend their analysis based on Hoffman constant into our setting, because the dual problem of \eqref{eq:problem_primal_dual} is an unconstrained minimization problem with a piecewise-linear objective function, and the KKT system studied in \cite{applegate21b} is no longer valid. We here utilize a different proof idea based on studying the box constraint in \eqref{eq:problem_primal} and the piecewise linear structure in \eqref{eq:problem_dual} to show the sharpness result for \eqref{eq:problem_primal_dual}. More details of the proof is presented in Appendix \ref{subsec:largescale_proof}.

Once the sharpness condition is established, we can then prove Theorem \ref{thm:twoloop} following~\cite{applegate21b}. At a high level, one can show that under the sharpness condition the iterates in Equation \eqref{eq:pdhg_gradient} have sublinear convergence rate, i.e.,
$$
\textit{metric}(\bar{z}^{n,t})\le \frac{C}{t} \textit{metric}(\bar{z}^{n,0}) \ ,
$$
where we denote $z=(x,y)$ as the combined primal-dual variable, $C$ is a problem-dependent constant (see details in Appendix \ref{subsec:largescale_proof}), and $\textit{metric}$ is a non-negative metric to measure the quality of the solution (formally defined in Equation \eqref{eq:rhorz-pos}). The metric will be zero if the algorithm reaches an optimal solution. Recall that $n$ represents the outer loop iteration number, and $t$ is the inter loop iteration number. Detailed expressions for $\bar{x}^{n,t}$ and $\bar{y}^{n,t}$ can be found in Algorithm \ref{alg:twoloop}. 

Generally speaking, the restart condition for the outer loop can guarantee that if $t\ge 2C$, we have $$\textit{metric}(\bar{z}^{n+1,0})=\textit{metric}(\bar{z}^{n,t})\le \frac{1}{2} \textit{metric}(\bar{z}^{n,0})\ ,$$
thus the metric halves after one outer iteration. This guarantees the global linear convergence of Algorithm \ref{alg:twoloop}. The formal proof of this theorem is provided in Appendix \ref{subsec:largescale_proof}.


In the next section, we use data from a large-scale field experiment to present an application, in which we use PDLP to solve an IPwC problem. We compare the performance of the algorithm with state-of-the-art benchmark methods. 

\section{Empirical Application}\label{sec:validation}
We begin this section by introducing the data and business problem that we address in this application. We then explain how we design customer segments, and how we estimate customer response functions for each marketing action. We introduce benchmark methods, hardware resources and performance measures, and then present a series of results comparing the performance of the algorithm with the benchmark methods.   

\subsection{Data and Business Problem}\label{sec:estimation}
The data was provided by a large American retailer. The retailer operates membership wholesale club stores selling a broad range of products, including electronics, furniture, outdoor, toys, jewelry, clothing, and grocery items. The retailer uses promotions to attract new members, and has implemented large-scale experiments to help it personalize which promotions it should send to different prospective customers. Customers must register for club membership in order to purchase, and the retailer matches the name and address provided at registration to track which customers responded to each promotional offer. 

The data describes a large field experiment conducted by the firm in 2015.\footnote{Data from this experiment has been used in several previous studies, including: \cite{simester2020efficiently} and \cite{prospective20}.} The experiment's goal was to compare how prospective customers responded to five different direct mail promotions, and a no-action control (a total of six experimental conditions). The firm wanted to use this information to design a targeting policy that recommends which marketing action to choose for each customer. A customer represents a separate prospective household, and the customers were randomly assigned to the six experimental conditions. In total, the experiment included approximately 2.4 million unique customers (households).

The profit earned from each customer was measured over the twelve months after the date the promotions were mailed. The profit measure included mailing costs, membership revenue, and profits earned from purchases in the store (if any). The different direct mail promotions included in the experiment were expected to impact these profit components in different ways. For example, some of the promotions offered free trial memberships of different lengths. Longer trials tend to increase adoption, but yield less membership revenue, and attract customers who spend less in the stores. The experimental conditions also included a discounted membership offer. Compared to free trials, discounted memberships tend to convert fewer prospective customers into members, but they generate more membership revenue, and tend to attract customers who spend more in the stores.   

Low response rates are particularly common when prospecting for new customers. As a result, only a relatively small number of customers responded in each experimental condition, and so across the 2.4 million customers, the profit was negative for most customers in the five promotion conditions (due to mailing costs), and zero for most customers in the no-action control (there were no mailing costs in the no-action control condition). However, for the customers who did respond, the twelve-month profit measure was positive and large. This distribution of outcomes is typical of many marketing actions. Overall, averaging across customers within a treatment condition, four of the five marketing actions generated positive average profits compared to the ``no action'' control.\footnote{Specifically, for a given action, the uniform policy in which every customer receives that action yields higher average profits than a uniform policy in which every customer receives the `no action'' control.} 

\subsection{Designing Customer Segments} \label{subsec:designing segments}

The volume and similarity constraints in Problem~\eqref{eq:problem_setup} require customer segmentation. We group individual customers into segments in this application using zip codes, which are available for all prospective customers. This offers three benefits.  First, as we discussed in Section \ref{sec:problem}, defining segments using zip codes provides a practical solution to the absence of data about which individual customers fall within a protected class. Second, in the prospecting application that we study, firms do not have past purchasing histories for individual customers. For this reason, segmentation based upon zip codes is particularly common when prospecting for new customers, because census data provides detailed no-cost demographic measures (see for example \citealp{prospective20}).\footnote{In contrast, when targeting existing customers, segmentation is often based upon past purchasing measures, such as the recency, frequency and monetary value of past purchases (\citealp{sahni2017targeted}).  Alternatively, segments could be defined at the store level. A retailer might require that predicted store sales are at least as high as last year, or that the average number of promotions received by customers neighboring each store is similar for each store.} 

Using zip codes to segment customers also helps to address a primary objective of our empirical analysis; observing how well different methods perform when varying the number of constraints. Recall that the number of constraints is a function of the number of customer segments $K$. The hierarchical nature of zip codes provides a convenient way to vary the number of segments. In particular, a four-digit zip code is identified by the first four digits of a five-digit zip code, and contains all of the five-digit zip codes that share those first four digits. As a result, the assignment of five-digit zip codes to four-digit zip codes is mutually exclusive and collectively exhaustive. The same properties apply when we consider three-digit zip codes (or even two-digit and one-digit zip codes). In this application, we define the customer segments using either three-digit zip codes, four-digit zip codes or five-digit zip codes.

For some five-digit zip codes, the number of prospective customers available in that zip code is small. With similarity constraints, trivial optimal solutions are obtained. Thus, in our validation exercise, we focus on five-digit zip codes with at least 4,000 customers in our data. This leaves us $I= 2,065,758$ customers in our validation exercise.\footnote{Recall that our focus is on optimization, rather than minimizing or accounting for estimation errors. Therefore, in our validation exercise, we identify the optimal policy using the same data that we use to predict the response functions (see the discussion in Appendix \ref{subsec:targetingvariables}). In practice, firms may want to minimize over-fitting by designing the optimal policy using a different group of customers than the sample used for estimation.} Our final sample has $K=229$ when we use five-digit zip codes to define customer segments, $K=87$ when we use four-digit zip codes, and $K=18$ when we use three-digit zip codes. When we compare how the different methods perform when we vary the number of customers, we select 25\%, 50\%, and 75\% customers from each customer segment to keep the number of customer segments fixed. 

To solve Problem~\eqref{eq:problem_setup}, we also need to choose exogenous parameters for all of the constraints. We summarize the choice of parameters used in our empirical analyses in Appendix \ref{app:parameters}. With these parameter choices, the number of constraints in our implementation is $2JK+K+K(K-1)J/2 + K(K-1)/2+I$. For example, when we define the customer segments using a five-digit zip code ($K=229$) and consider all customers ($I=2,065,758$), the number of constraints is $2,224,913$.

\subsection{Estimating Response Functions} \label{subsec:estimating response functions}
The randomized assignment of customers in this training data makes it straightforward to use this data to estimate causal treatment effects for each marketing action. In particular, 27 covariates were available to describe each prospective household. This information was purchased by the firm from a third-party commercial data supplier. We use Lasso with a full set of interactions to predict the profit associated with each marketing action for each customer (in this application, a household is equivalent to a customer). In particular, we estimate Lasso six times, representing a separate model for each marketing action (including the no-action control). Although there are many alternative estimators available, Lasso has been shown to be effective in this type of application.\footnote{See for example {\cite{searchtargeting20}} and \cite{prospective20}, who compare the performance of different estimators. Notably, \cite{prospective20} uses a subset of the same data that we use in this study.}

We use these estimates to calculate the incremental predicted profit for each customer and marketing action, by subtracting the predicted profit in the control from the predicted profit associated with that action. In Appendix \ref{subsec:targetingvariables}, we provide additional details, including definitions and summary statistics for the covariates.


\subsection{Benchmark Methods, Hardware and Performance Measures} \label{sec:validation_design}
We consider three benchmark methods: primal simplex, dual simplex, and the barrier method. These are considered state-of-the-art solvers for linear programming problems. We implement each method using Gurobi, which is a commercial software package explicitly designed to solve optimization problems, such as linear programming problems. Data engineers at Gurobi have spent years fine-tuning their implementations of these benchmark methods, and the Gurobi implementations are generally considered amongst the most powerful implementations available.

Different firms have access to different hardware resources. To compare how this affects the performance of the different methods, we compare their performance using three different hardware combinations:  

\begin{itemize}
    \item \textbf{H1}: 8-core CPU and 64GB memory;
    \item \textbf{H2}: 16-core CPU and 128GB memory;
    \item \textbf{H3}: 32-core CPU and 256GB memory.
\end{itemize}

The first hardware option is likely to be feasible and affordable for essentially any firm, because it is representative of the hardware in a standard laptop. The other two specifications include more powerful CPUs and more memory. These hardware configurations recognize that medium and larger firms, and smaller firms with sophisticated technology capabilities, have access to workstations or server-level resources. We implement all hardware options using Google Cloud Computing Services.

We compare the different methods using two performance measures. We first measure feasibility, by asking whether a given method and hardware combination converges to an optimal solution within four days. 
The second measure focuses on computation time, and measures the total time used to solve the problem (from the set of problems that are actually solved). We use these two measures to show that our method can solve larger IPwC problems, and can consistently solve them faster (given the same hardware specification). 

As we point out in Section \ref{sec:problem}, Problem (\ref{eq:problem_setup}) is a convex problem, and Theorem \ref{thm:twoloop} guarantees the convergence of our method. Therefore, any method that can solve the IPwC problem will deliver the same optimal profit. For this reason, we omit the discussion of the optimal profit in this section. However, we turn attention to the optimal profit in Section \ref{sec:manaimplication}, when we illustrate the economic implications of solving larger problems. 

\subsection{Results}\label{sec:results}
Table \ref{table:varyI} reports the results when we vary the number of customers $I$ using hardware $H_3$ and five-digit zip codes to define customer segments. Recall that we randomly select 25\%, 50\%, and 75\% customers from each customer segment to investigate how the number of customers influences the performance of different methods. In this setup, the number of customer segments is fixed at $K=229$. The table reports the total number of seconds required to solve each IPwC problem. If the method cannot solve an instance of the problem, the table reports either ``-'' indicating it ran out of memory, or ``*'' indicating it ran out of time.

\begin{table}[h]
\doublespacing
\centering
\begin{threeparttable}
\caption{Varying the Number of Customers}\vspace{.1in}
\begin{tabular}{ l c c c c} 
 \hline\hline
& Primal Simplex & Dual Simplex  & Barrier & Algorithm \ref{alg:twoloop} \\ 
 \hline
 
  \cline{2-5}
  Full Sample ($I=2,065,758$) & - & - & - & 265,660 \\
 75\% Sample ($I=1,549,323$) &   * & * & * & 180,900 \\
  50\% Sample ($I=1,032,884$)  &  * & * & * & 14,780 \\ 
  25\% Sample ($I=516,435$)  & 44,189 & 10,563 & * & 5,204\\
 \hline\hline
\label{table:varyI}
\end{tabular}
\begin{tablenotes}[para,flushleft]
\item \small \singlespacing \vspace{-.6in} Notes. The table reports the total computation time (in seconds) used by each method to solve each instance of the IPwC problem, when varying the proportion of customers included in each problem. If the method cannot solve an instance of the problem, the table reports either ``-'' indicating it ran out of memory, or ``*'' indicating it ran out of time. 
\end{tablenotes}
\end{threeparttable}
\end{table}


We see that Algorithm \ref{alg:twoloop} solves all of the IPwC problem instances proposed in Table \ref{table:varyI}. In contrast, primal simplex and dual simplex can only solve the problems with the smallest number of customers (25\% of the sample), while the barrier method is unable to solve any of the problems. When primal simplex and dual simplex can solve the problem, they require a lot more computation time than Algorithm \ref{alg:twoloop}. Recall that in Table \ref{table:targetingliterature}, we summarized eight recent marketing papers that studied personalization problems. In four of these papers, the number of customers exceeded 1 million. It is notable that none of the three benchmark methods were able to solve the IPwC problems proposed in Table \ref{table:varyI} when the number of customers exceed 1 million. 

Recall that the findings in Table \ref{table:varyI} were obtained using hardware $H_3$. In Appendix \ref{subsec:numberhouseholds}, we report additional findings when using hardwares $H_1$ and $H_2$. With fewer hardware resources, larger versions of the IPwC problem become unsolvable even with our proposed method. 

With increases in the number of customers, both the number of decision variables and the number of constraints increase. In Table \ref{table:varyK}, we investigate the impact of varying the number of segments ($K$). We use all customers  ($I=2,065,758$) and hardware option $H_3$ in all scenarios. Results using hardware configurations $H_1$ and $H_2$ are reported in Appendix \ref{subsec:numbersegments}.

\begin{table}[h]
\doublespacing
\centering
\begin{threeparttable}
\caption{Varying the Number of Segments}\vspace{.1in}
\begin{tabular}{ l c c c c} 
 \hline\hline
& Primal Simplex & Dual Simplex  & Barrier & Algorithm \ref{alg:twoloop} \\ 
 \hline
 
  \cline{2-5}
  Five-digit zip codes ($K=229$) & - & - & - & 265,660 \\
 Four-digit zip codes ($K=87$) &   * & * & * & 12,210 \\
  Three-digit zip codes ($K=18$)   &  84,679 & 1,304 & 9,375 & 706 \\ 
 \hline\hline
\label{table:varyK}
\end{tabular}
\footnotesize
\begin{tablenotes}
\item \singlespacing \vspace{-.45in} Notes. The table reports the total computation time (in seconds) used by each method to solve each instance of the IPwC problem, when varying the coarseness of the customer segmentation. If the method cannot solve an instance of the problem, the table reports either ``-'' indicating it ran out of memory, or ``*'' indicating it ran out of time. 
\end{tablenotes}
\end{threeparttable}
\end{table}


In this comparison, Algorithm \ref{alg:twoloop} again solves all of the problems, while the benchmark methods only obtain solutions when the segmentation is relatively coarse (using 3-digit zip codes, for which $K=18$). In this scenario, we again see that our proposed method has a much faster computation time. More generally, when the number of segments decreases, the computation time for Algorithm \ref{alg:twoloop} decreases very quickly. 

In our next set of comparisons, we investigate what kind of constraints are most challenging for the different methods. In particular, we investigate how well the different methods perform when the IPwC problem contains: (a) only Volume I and II constraints, (b) only Similarity I and II constraints, or (c) the combination of all of these constraints. All scenarios use hardware option $H_3$, all of the customers ($I=2,065,758$), and five-digit zip codes to define customer segments ($K=229$). 

\begin{table}[h]
\doublespacing
\centering
\begin{threeparttable}
\caption{Varying the Type(s) of Constraints}\vspace{.1in}
\begin{tabular}{ l c c c c} 
 \hline\hline
& Primal Simplex & Dual Simplex  & Barrier & Algorithm \ref{alg:twoloop} \\ 
 \hline
 
  \cline{2-5}
 Both Types of Constraints    &  - & - & - & 265,660 \\ 
 Only Similarity Constraints &   - & - & - & 226,700 \\
 Only Volume Constraints & 3,233 & 205 & 270 & 228\\
\hline

 \hline\hline
\label{table:constraint}
\end{tabular}
\footnotesize
\begin{tablenotes}
\item \singlespacing \vspace{-.45in} Notes. The table reports the total computation time (in seconds) used by each method to solve each instance of the IPwC problem, when varying the type(s) of constraints included in the problem. If the method cannot solve an instance of the problem, the table reports either ``-'' indicating it ran out of memory, or ``*'' indicating it ran out of time.  
\end{tablenotes}
\end{threeparttable}
\end{table}

The findings in Table \ref{table:constraint} reveal that similarity constraints appear to introduce a more formidable challenge to IPwC problems than volume constraints. If a firm only wants to impose volume constraints, then even with many customers and many segments, the benchmark methods can solve the problem in a reasonable amount of time (3,233 seconds is just less than one hour). However, if the firm wants to incorporate similarity constraints, perhaps due to fairness concerns, then the problem becomes too difficult for these methods. It is these settings in which our proposed algorithm will be particularly useful. 

Together, the empirical application described in this section reveals that our proposed method can extend the scale of IPwC problems that can be solved. This includes expanding the number of customers and customer segments that can be considered, and enabling the inclusion of similarity constraints, rather than just volume constraints. In the next section, we measure the implications for firms, by comparing how adjusting the problem to make it solvable with the benchmark methods affects the profitability of the resulting policy.

\section{Economic Implications}\label{sec:manaimplication}
In this section, we illustrate the economic importance of our proposed method by measuring the increase in expected profit that a firm can earn by using this method to solve the IPwC problem, compared to a more restricted problem that can be solved using the benchmark methods. We first introduce an alternative version of the targeting problem with constraints, in which actions are assigned at the segment level. The benchmark methods can all solve this more restricted problem. We compare the computation cost and optimal profit differences between the IPwC problem and this problem. We then adjust the level of the segmentation in the alternative problem, so that it requires the same computation time as the IPwC problem. This allows us to compare the profit difference when using an equivalent computation time.

\subsection{A More Restricted Version of the Targeting Problem with Constraints}

In Section \ref{sec:validation}, we showed that Algorithm \ref{alg:twoloop} can solve the IPwC problem in our empirical application using the complete set of customers, a large number of segments (constructed at the 5-digit zip code level), and the complete set of volume and similarity constraints. In contrast, this problem could not be solved by any of the three benchmark methods.

If a firm did not have access to Algorithm \ref{alg:twoloop}, and instead was forced to use one of the benchmark methods, it would have to adjust the problem. One solution would be to reduce the number of number of customers, number of segments, or number of constraints. However, this fundamentally changes the firm's problem. If the firm wants to impose similarity constraints on the full set of customers and segments, then taking any of these steps to make the problem solvable would mean that the solution is not guaranteed to be an optimal or even feasible solution to the problem the firm actually wants to solve.

An alternative way to make the problem solvable is to reduce the number of decision variables, by requiring that some customers receive the same marketing actions. In particular, the firm could choose a different marketing action for each customer segment, but require that all customers within a segment receive the same action. We can formulate this segment-level policy as follows: 

\begin{align}
    \max_{x_{k^*}^j} & \ \ \ \sum_{k^*=1}^{K^*} \sum_{j=1}^J p_{k^*}^j x_{k^*}^j \nonumber \\
    \text{s.t.} &\ \ \ a_k^j \le \sum_{i\in S_{k}}  x_{k^*}^j \le b_k^j, \  \text{ for } j=1,...,J, k=1,...,K \ \ \ \textbf{(Volume I)}   \nonumber \\
    & \ \ \  L_k \le \sum_{i\in S_{k}}  \sum_{j=1}^J c^j_{k^*} x_{k^*}^j \le U_k, \ \text{ for } k=1,...,K \ \ \ \textbf{(Volume II)}  \nonumber \\
    & \ \ \  \frac{1}{n_{k_1}} \sum_{i\in S_{k_1}} x_{k^*}^j \le \lambda_j^{k_{1}k_{2}} \frac{1}{n_{k_2}} \sum_{i\in S_{k_2}} x_{k^*}^j + g_j^{k_{1}k_{2}}, \nonumber \\
    &\ \ \  \text{ for } j=1,...,J,  k_1=1,...,K, k_2=1,...,K, k_1\neq k_2 \ \ \ \textbf{(Similarity I)} \nonumber \\
     & \ \ \  \frac{1}{n_{k_1}} \sum_{i\in S_{k_1}}\sum_{j=1}^J d_{k^*}^j x_{k^*}^j \le \gamma^{k_{1}k_{2}} \frac{1}{n_{k_2}} \sum_{i\in S_{k_2}}\sum_{j=1}^J d_{k^*}^j x_{k^*}^j + h^{k_{1}k_{2}},  \nonumber \\
    &\ \ \  \text{ for } k_1=1,...,K, k_2=1,...,K, k_1\neq k_2  \ \ \ \textbf{(Similarity II)} \nonumber \\
    & \ \ \  \sum_{j=1}^J x_{k^*}^j \le M_{k^*},  \ \text{ for } k^*=1,...,K^*  \ \ \ \textbf{(Targeting)}\nonumber \\  
    & \ \ \ 0 \le x_{k^*}^j \le 1 \ \ \ \textbf{(Feasibility)} \ .
    \label{eq:problem_setup_segment}
\end{align}
We will describe this as the ``Segment Personalization with Constraints'' problem, or ``SPwC''. Notice that in this problem, customers are segmented in two different ways. We use $k$ to denote the segmentation used in the volume and similarity constraints, and $k^*$ to denote the segmentation used for grouping customers when assigning marketing actions. Specifically, a customer is assigned to both a $k$ segment and separately to a $k^*$ segment. These two segmentations could be identical, but they could also vary. For example, a firm might want to use zip codes to construct segments for the similarity constraints (see earlier discussion), but when assigning marketing actions, segment customers so they align with the firm's production systems (e.g. sales person territories). 

Compared to the IPwC, the decision variables in this problem change to $x_{k^*}^j\in [0,1]$ to represent the probability that customers in segment $k^*$ receive marketing action $j$. The formulation allows boundary solutions, in which all customers in segment $k^*$ receive treatment $j$ ($x_{k^*}^j = 1)$, or none of the customers in segment $k^*$ receive treatment $j$ ($x_{k^*}^j = 0)$. More generally, $x_{k^*}^j$ represents the proportion of customers in segment $k^*$ who will receive marketing action $j$. 

The $p_{k^*}^j$ term denotes the incremental profit that the firm earns from customer segment $k^*$ if it receives marketing action $j$. This is calculated as the sum of $p_i^j$ for all customers $i$ in segment $k^*$. The constraints in \eqref{eq:problem_setup_segment} are otherwise the same as the constraints in \eqref{eq:problem_setup}. The only difference between the two problems is that \eqref{eq:problem_setup} designs an individual-level targeting policy ($x_i^j$), while \eqref{eq:problem_setup_segment} designs a segment-level targeting policy ($x_{k^*}^j$). 


We implemented three separate versions of both problems (IPwC and SPwC), using the complete dataset ($I = 2,065,758)$, and the complete set of volume and similarity constraints. The three versions vary in the $k$-level segmentation used to define the volume and similarity constraints. Consistent with the analysis in Table \ref{table:varyK}, we defined these segments using zip codes identified at the 5-digit level ($K = 229$), 4-digit level ($K = 87$), or 3-digit level ($K = 18$).  For all three problems, the $k^*$-level segmentation used to segment the marketing actions in the SPwC problem was defined at the 5-digit zip code level. 

As we previously discussed, the problem is convex, and so any method that can solve a specific version of a problem, obtains the same optimal profit. However, the optimal profit varies within the pairs of IPwC and SPwC problems, and we summarize these profit differences in Table \ref{table:profit}. In the first row, we report the profit difference as an average per 100 customers. In the second row we calculate the difference in the computation cost when solving an IPwC problem using Algorithm \ref{alg:twoloop} versus solving the associated SPwC problem using the barrier or simplex methods. The computation costs were calculated using Google Cloud computing expenses, and are reported per 100 customers.\footnote{\label{footnote:comp time}When solving the IPwC problem using Algorithm \ref{alg:twoloop}, we divide the total computation cost by the total number of customers, and then multiple by 100. All of the SPwC problem can be solved using the barrier or simplex methods in one or two seconds, and so we treat this computation cost as zero.}

\begin{table}[h]
\doublespacing
\centering
\begin{threeparttable}
\caption{Optimal Profit: IPwC Compared to SPwC}\vspace{.2in}
\begin{tabular}{ l c c  c} 
 \hline\hline
   & 3-digit $k$-segments & 4-digit $k$-segments & 5-digit $k$-segments\\
   & $K = 18$  & $K = 87$ & $K = 229$\\
  \hline
  Optimal Profit Difference & \$$16.064$ & \$$19.124$ & \$$23.302$ \\
   Computation Cost Difference  & -\$$0.001$ & -\$$0.002$ &  -\$$0.012$ \\
   Total Benefit & \$$16.063$ & \$$19.122$ & \$$23.290$  \\
 \hline\hline
\end{tabular}
\begin{tablenotes}[para,flushleft]
\item \small \singlespacing \vspace{-.3in} Notes. The first row reports the difference in the optimal profit between the IPwC and SPwC problems (using the same constraints for each pair of problems). Profits are calculated as the average profit per 100 customers. The second row reports the difference in computation cost from solving the IPwC problem using Algorithm \ref{alg:twoloop} and the SPwC problem using the simplex or barrier methods. The computation costs are measured in terms of Google Cloud computing expenses, and are indexed to a cost per 100 customers. 
\end{tablenotes}
\label{table:profit}
\end{threeparttable}
\end{table}

There are several findings of interest. First, the optimal profit difference is positive, indicating that the optimal profit is higher for the IPwC problem than for the corresponding SPwC problem. This is what we would expect, because the two problems have identical sets of constraints, but the IPwC problem has more degrees of freedom. Any solution to the SPwC problem is also a feasible solution to the IPwC problem. As a result, the optimal solution to the IPwC problem is guaranteed to be (weakly) larger than the optimal solution to the SPwC problem.

Second, the optimal profit difference grows with the complexity of the constraints. When customers are segmented using 3-digit zip codes for the volume and similarity constraints, the optimal profit difference (between the IPwC and SPwC problems) is \$16.064, which grows to \$23.302 when segmenting at the 5-digit zip code level. The additional degrees of freedom in the IPwC problem provide it with more opportunities to find a solution that satisfies the additional complexity in the constraints. We caution that while this finding is perhaps intuitive, we have not established that it will generalize to all applications.

Third, we see that the computation cost difference is negative for all three pairs of problems. This indicates that the computation cost is higher when solving the IPwC problem using Algorithm \ref{alg:twoloop}, than when solving the SPwC problem using the simplex or barrier methods. Notably, the computation cost differences are trivial compared to the optimal profit differences. This suggests that if a firm was restricted to using the simplex or barrier methods, it may be profitable to invest in additional computation time, in order to solve versions of the SPwC problem with less coarse segmentation of the marketing actions. The tradeoff between optimal profit and computation time appears to strongly favor investing in additional computation time. 


This last result introduces a new question: if the (IPwC - Algorithm \ref{alg:twoloop}) and (SPwC - barrier or simplex) problems used an equivalent amount of computation, what would the optimal profit difference be? We address this question next.

\subsection{Difference in Optimal Profits Using the Same Computation Time}
In this analysis, we use 3-digit zip codes to define the $k$-level segmentation for the volume and similarity constraints, so that these constraints are identical in both the IPwC and SPwC problems. Recall from Table \ref{table:varyK}, that the computation time required by Algorithm \ref{alg:twoloop} to solve this IPwC problem is 706 seconds. In the SPwC problem, we vary the $k^*$-level segmentation used to group customers when assigning actions, so that solving the SPwC problem also requires 706 seconds (using the dual simplex method).

More specifically, when each customer is in its own $k^*$ segment, so that actions are chosen separately for each customer, then the SPwC and IPwC problems are equivalent. We know that with this level of $k^*$ segmentation, the dual simplex method requires 1,034 seconds to solve the SPwC problem (Table \ref{table:varyK}). In contrast, if the $k^*$ segmentation is at the 5-digit zip code level, then the dual simplex method requires just seconds to solve the problem (Footnote \ref{footnote:comp time}). Therefore, we seek an intermediate level of $k^*$ segmentation, between individual customers and 5-digit zip codes, in which the dual simplex method requires approximately 706 seconds to solve the SPwC problem. We can then compare the optimal profit from the (IPwC - Algorithm \ref{alg:twoloop}) and (SPwC - dual simplex) solutions, where the two solutions are each obtained using the same amount of computation.


To vary the coarseness of the $k^*$ segmentation in the SPwC problem, we randomly select some 5-digit zip codes in which we segment at the zip code level, and in the remaining 5-digit zip codes, we segment at the individual customer level. Where we segment at the zip code level, all households within that zip code are assigned the same action. Where we segment at the customer level, actions are assigned separately to individual customers. The larger the proportion of 5-digit zip codes that we assign actions at the zip code level, the more coarse the $k^*$ segmentation, and the fewer the degrees of freedom available to the SPwC problem. The optimal profit from the SPwC problem will (weakly) decrease, and the required computation time will also decrease. 

We illustrate the findings in Figure~\ref{fig:carrier}, where each data point represents an average across 30 iterations of the SPwC problem (with different random draws of zip codes in each iteration). The X-axis varies the proportion of randomly selected zip codes in which the SPwC problem assigns actions at the zip code level (in the remaining segments actions are assigned at the customer level). The black line reports the average computation time for the SPwC problem (using dual simplex). The columns report the average profit difference per 100 customers between the (IPwC - Algorithm \ref{alg:twoloop}) and (SPwC - dual simplex) solutions.

As expected, the IPwC problem is always more profitable than the SPwC problem (the profit difference is positive). Moreover, increasing the proportion of segments in which all customers receive the same marketing action, both decreases the computation time for the SPwC problem (black line), and increases the profit advantage for the IPwC problem over the SPwC problem (columns). Recall that the computation time required to solve the IPwC problem using Algorithm \ref{alg:twoloop} is 706 seconds. When the SPwC assigns actions at the zip code level in 21\% of the zip codes, dual simplex requires a nearly identical amount of time (689 seconds) to solve the SPwC problem. At this level of segmentation, the IPwC problem earns \$3.01 more profit per 100 customers.

\begin{figure}[h]
\begin{center}
\caption{IPwC Compared to SPwC}
\includegraphics[width=0.7\textwidth]{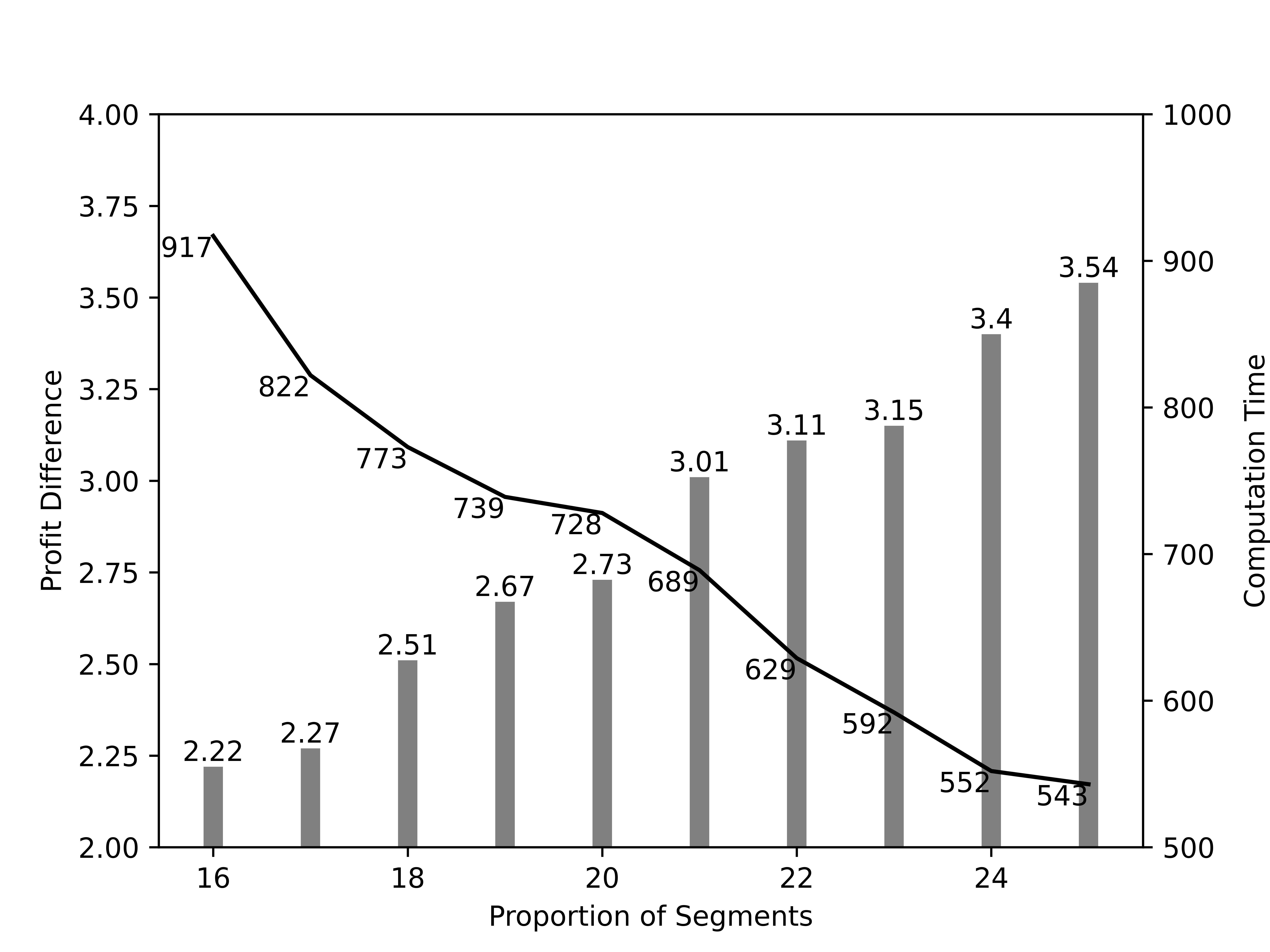}
\label{fig:carrier}
\end{center}
\centerline{\begin{minipage}{.75\textwidth}
\vspace{0.1cm} 
\small  Notes: Each data point represents an average across 30 iterations of the SPwC problem (with different random draws of zip codes in each iteration). The X-axis varies the proportion of randomly selected zip codes in which the SPwC problem assigns actions at the zip code level. The black line reports the average computation time for the SPwC problem (using dual simplex). The columns report the average profit difference per 100 customers between the (IPwC - Algorithm \ref{alg:twoloop}) and (SPwC - dual simplex) solutions.
\end{minipage}}
\end{figure}


The findings in Table \ref{table:profit} illustrate that the additional degrees of freedom that Algorithm \ref{alg:twoloop} can exploit, may result in substantial profit increases. We caution that the optimal profit differences between the SPwC and IPwC problems are specific to the constraints and parameters that we used. However, we expect that the key implications from Table \ref{table:profit} will generalize: (a) the SPwC problem yields lower optimal profits than the IPwC problem, and (b) Algorithm \ref{alg:twoloop} can solve larger versions of the IPwC problem than the benchmark methods. We conclude that the algorithm has the potential to contribute to economically important increases in the profitability of firms' personalization policies.

\section{Conclusion}\label{sec:conclusion}
Much of the recent research in marketing using machine learning has focused on new methods for estimating customer response functions. Our paper takes a step in a different direction: using recent advances in optimization methods to help firms optimize policies once they have estimated those response functions. We propose a method for optimizing large-scale personalization problems in the presence of constraints. 

We focus on two types of constraints. Volume constraints restrict the total number of marketing actions that can be taken, either through (predetermined) minimum or maximum thresholds. Similarity constraints limit the difference in the frequency of marketing actions taken with different customer segments, and are often motivated by concerns for fairness.  

The proposed method departs from existing state-of-the-art methods by using first-order methods for linear programming to increase scalability. The algorithm overcomes the challenge that first-order methods quickly find moderately accurate solutions, but then slow down. To address this limitation, the proposed method uses a two-loop primal-dual hybrid gradient (PDHG) algorithm. 

We provide theoretical guarantees on the performance of the proposed method for personalization problems with constraints. First, we show that our proposed method requires fewer computations per iteration than state-of-the-art benchmark methods (primal simplex, dual simplex and barrier methods). Second, we adapt existing guarantees on optimality and computation speed, by adjusting the proofs to accommodate the features of personalization problems. 

In an empirical application, we compare the proposed method to the three benchmark methods. Our comparison focuses on both the size of the problems that can be solved by each method, and the computation time required to reach the optimal solution. The proposed method greatly expands the size of the personalization problems that can be solved, particularly when personalization problems include similarity constraints. Incorporating similarity constraints is especially challenging for the benchmark methods.

The expansion in the size of the problems that are now feasible includes increases in the number of customers, number of customer segments, and number of constraints. Across all of these problems, the proposed method required much less computation time to find the optimal solution compared to any of the benchmark methods. Together, our theoretical and empirical results confirm that designing large-scale personalization policies with constraints is now feasible. 

Many interesting problems remain, and these offer promising avenues for future research. First, as we mentioned in Section \ref{sec:problem}, our approach remains within the predict-then-optimize paradigm. This framework has a potential limitation: the estimation goal is not always the same as the optimization goal. \cite{chunprofits20} propose one approach to address this misalignment when the personalization problem has no constraints. Future research could investigate how to address this misalignment in the presence of constraints. Second, because we use finite sample datasets to estimate customer response functions, these response estimates are estimated with error. The errors will affect the performance of  optimization methods that rely upon those estimates. Future research could investigate how to mitigate the cost of these errors in the optimization step.

\newpage
\singlespacing
\bibliographystyle{chicago} 
\bibliography{targeting}

\newpage 
\setcounter{table}{0} \setcounter{figure}{0} %
\setcounter{equation}{0} \setcounter{subsection}{0}
\setcounter{page}{1} %
\setcounter{footnote}{0} %

\renewcommand{\thetable}{A\arabic{table}} %
\renewcommand{\thefigure}{A\arabic{figure}} %
\renewcommand{\theequation}{A\arabic{equation}}%
\renewcommand{\thefootnote}{A\arabic{footnote}} %
\renewcommand{\thepage}{Appendix Page \arabic{page}}
\renewcommand{\thesubsection}{A.\arabic{subsection}}%
\renewcommand{\thesubsection}{A.\arabic{subsection}}

\doublespacing
\section*{Appendix}
\subsection{Alternative Problem Setup}\label{subsec:setup_another}
In this section, we show how a personalization problem with constraints can be modeled as a linear programming problem if we assume there exists interdependence across different marketing actions. We use a case where interdependence can exist for at most two marketing actions to illustrate. 

Let $y^{j_1, j_2}_i$ represent the use of marketing actions $j_1, j_2$ together on customer $i$, $z_i^j$ represent just use marketing action $j$ on customer $i$. $p_i^j$ is the incremental profit if customer $i$ only receives marketing action $j$, and $q_i^{j_1, j_2}$ as the incremental profit if customer $i$ receives both marketing action $j_1$ and $j_2$. $x_i^j$ is an auxiliary variable. We can model the problem as the LP model in \ref{eq:problem_setup2}.

To understand Problem \ref{eq:problem_setup2}, the key parts are targeting and auxiliary variable conditions. What the targeting condition illustrates is that if the customer $i$ only receives marketing action $j$, $z_i^j=1$ and $y_i^{j_1,j_2} = 0$. If customer $i$ receives marketing actions $j_1$ and $j_2$, $z_i^j=0$ and $y_i^{j_1,j_2} = 1$. Here, to simplify the problem expression, we restrict that each customer can at most receive two marketing actions. Next, based on the auxiliary variable condition, $x_i^j$ represents the probability a given customer $i$ receives marketing action $j$. This probability is a sum of the probability customer $i$ only receives marketing action $j$ and the probability customer $i$ receives two marketing actions with one of them as marketing action $j$. With this setup, all other constraints (volume and similarity constraints) have the same meaning as the one that we describe in Section \ref{sec:problem}.

The problem is still a linear programming, and our proposed method can be directly applied.

\begin{align}
    \max_{x,y,z} & \ \ \ \sum_{i=1}^I \left(\sum_{j=1}^J p_i^j z_i^j + \sum_{1\le j_1<j_2\le J} a^{j_1, j_2}_i y^{j_1, j_2}_i\right)  \nonumber \\
    \text{s.t.} 
    &\ \ \ a_k^j \le \sum_{i\in S_k} x_i^j \le b_k^j, \  \text{ for } j=1,...,J, k=1,...,K \ \ \ \textbf{(Volume I)}   \nonumber \\
    & \ \ \  L_k \le \sum_{i\in S_k} \sum_{j=1}^J c^j_i x_i^j \le U_k, \ \text{ for } k=1,...,K \ \ \ \textbf{(Volume II)}  \nonumber \\
    & \ \ \  \frac{1}{n_{k_1}} \sum_{i\in S_{k_1}} x_i^j \le \lambda_j^{k_{1}k_{2}} \frac{1}{n_{k_2}} \sum_{i\in S_{k_2}} x_i^j + g_j^{k_{1}k_{2}}, \nonumber \\
   & \ \ \ \text{ for } j=1,...,J,  k_1=1,...,K, k_2=1,...,K, k_1 \neq k_2 \ \ \ \textbf{(Similarity I)} \nonumber \\
     & \ \ \  \frac{1}{n_{k_1}} \sum_{i\in S_{k_1}}\sum_{j=1}^J d_i^j x_i^j \le \gamma^{k_{1}k_{2}} \frac{1}{n_{k_2}} \sum_{i\in S_{k_2}}\sum_{j=1}^J d_i^j x_i^j + h^{k_{1}k_{2}}, \nonumber \\
     & \ \ \ \text{ for } k_1=1,...,K, k_2=1,...,K, k_1 \neq k_2  \ \ \ \textbf{(Similarity II)} \nonumber \\
    & \ \ \  \sum_{j=1}^J z_i^j+ \sum_{1\le j_1<j_2\le J} y_i^{j_1,j_2}\le 1,  \ \text{ for } i=1,...,I  \ \ \ \textbf{(Targeting)}\nonumber \\
    & \ \ \  x_i^j\ge y_i^{j, j_2},  x_i^j\ge y_i^{j_1, j}, x_i^j\ge z_i^{j} \ \text{ for } i=1,...,I, j=1,...,J, 1\le j_1<j, j< j_2\le J   \nonumber \\
    & \ \ \  x_i^j= z_i^{j}+\sum_{j_2>j} y_i^{j, j_2} +  \sum_{j_1<j} y_i^{j_1,j} \ \text{ for } i=1,...,I, j=1,...,J    \ \ \ \textbf{(Auxiliary Varible)} \nonumber \\
    & \ \ \ 0\le x_i^j \le 1, 0\le y_i^{j_1,j_2}\le 1, 0\le z_i^j\le 1 \ \ \ \textbf{(Feasibility)} \ .
    \label{eq:problem_setup2}
\end{align}

\subsection{Details of Algorithm \ref{alg:twoloop}}\label{subsec:algorithm_detail}

For notational convenience, we denote $z=(x,y)$ as the combined primal-dual variable, and $Z=X\times Y$ as the constraint set for $z$. We adopt the normalized duality gap in \cite{applegate21b}:
\begin{definition}
Denote $(\tx,\ty) = \tz$ and the objective in Equation \eqref{eq:problem_primal_dual} as $L(x,y)$. We define the normalized duality gap at solution $z$ with radius $r$ as 
\begin{align}\label{eq:rhorz-pos}
\rho_{r}(z) := \frac{\max_{\tz \in B_{r}(z)\cap Z}\{ L(x, \ty) - L(\tx, y)\} }{r} \ ,
\end{align}
where $B_{r}(z) :=\{\tz \in Z | \|z-\tz\|\le r\}$ is the ball centered at $z$ with radius $r \in (0, \infty)$ intersected with the set $Z$, and $\| \cdot \|$ is a norm on $Z$.
\end{definition}

The normalized duality gap decay condition referred in Algorithm \ref{alg:twoloop} is defined as whenever the normalized duality gap halves:
\begin{equation*}
     \rho_{||\bar{z}^{n,t}-z^{n,0}||}(\bar{z}^{n,t}) \le 0.5  \rho_{||z^{n,0}-z^{n-1,0}||}(z^{n,0}).
\end{equation*}


\subsection{Proof of Theorem \ref{thm:twoloop}}\label{subsec:largescale_proof}

We here consider the primal-dual form of linear program \eqref{eq:problem_primal_dual}. The proof of Theorem \ref{thm:twoloop} is based on the results in recent paper \cite{applegate21b}. The major difference between our setting and \cite{applegate21b} is that we have a box constraint $x\in X=\{0\le x \le e\}$ on the primal variable, while the problem studied in \cite{applegate21b} considers unbounded constraint $x\in X=\{x\ge 0\}$. As a result, the sharpness results in Section 3.3 in \cite{applegate21b} do not readily apply here. Indeed, it is highly challenging to extend their analysis based on Hoffman constant into our setting, because the dual problem of \eqref{eq:problem_primal_dual} is an unconstrained minimization problem with a piecewise-linear objective function, and the KKT system studied in \cite{applegate21b} is no longer valid. Instead, we here present a very different analysis to show that \eqref{eq:problem_primal_dual} is also a sharp problem. Then we are immediately able to utilize Theorem 1 in \cite{applegate21b} to prove Theorem \ref{thm:twoloop}.

The next theorem shows that \eqref{eq:problem_primal_dual} is a sharp problem on any bounded region $S$.
\begin{theorem}\label{prop:sharp}
Suppose the primal problem \eqref{eq:problem_primal} has a finite optimal solution. Then \eqref{eq:problem_primal_dual} is a sharp primal-dual problem, namely, for any $r>0$ and $R>0$, there exists $\alpha>0$ such that it holds for any $z\in B_R(0)$ that
$$
\rho_r(z)\ge \alpha \dist(z, Z^*) \ ,
$$
where $Z^*$ is the optimal solution set to \eqref{eq:problem_primal_dual}, and $\dist(z, Z^*)=\min_{z^*\in Z^*} \|z-z^*\|_{\infty}$ is the distance between $z$ and the optimal solution set $Z^*$ in the $\ell_{\infty}$ norm.
\end{theorem}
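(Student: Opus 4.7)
The plan is to prove sharpness by decomposing the normalized duality gap into primal and dual KKT-style residuals, then invoking Hoffman's error bound separately on the primal polyhedron and on the piecewise-linear dual. Using the bilinearity of $L$ and the product structure of the $\ell_\infty$ ball (for which the result in the theorem is stated), I would first split
\[
r\,\rho_r(z) = \sup_{\ty \in B_r(y)\cap Y}(\ty-y)^T(Gx-h) + \sup_{\tx \in B_r(x)\cap X}(x-\tx)^T(p+G^T y),
\]
and evaluate each supremum coordinatewise to obtain
\[
r\,\rho_r(z) = \sum_i \bigl[r(Gx-h)_i^+ + \min(y_i,r)(Gx-h)_i^-\bigr] + \sum_w \bigl[\min(x_w,r)(p+G^T y)_w^+ + \min(1-x_w,r)(p+G^T y)_w^-\bigr].
\]
The first sum captures primal infeasibility together with the complementary-slackness violation $y\cdot(h-Gx)^+$, and the second, by direct calculation, equals the stationarity residual $L(x,y) - q(y)$, where $q(y) := -h^T y - \sum_w (p_w+G_w^T y)^-$ is the dual objective. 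Using $z\in B_R(0)$, each $\min(\cdot,r)$ cap can be replaced by a uniform multiple $\mu := \min(1, r/\max(1,R)) > 0$ of the uncapped quantity.

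Next I would apply Hoffman's lemma twice. The primal optimal set $X^* = \{x\in X : Gx\le h,\ p^T x\le P^*\}$ is a bounded polyhedron, giving a constant $C_1$ with $\dist(x,X^*) \le C_1\bigl[(p^T x - P^*)^+ + \|(Gx-h)^+\|_1\bigr]$ for $x\in X$. For the dual, although $q$ is only piecewise linear, it is the minimum of at most $2^W$ affine functions $\ell_S(y) := -h^T y + \sum_{w\notin S}(p_w + G_w^T y)$ indexed by sign patterns $S\subseteq\{1,\ldots,W\}$, so $Y^* = \{y\ge 0 : q(y)\ge q^*\}$ is the intersection of the halfspaces $\{\ell_S(y)\ge q^*\}$ with the orthant $\{y\ge 0\}$, and is thus a polyhedron; Hoffman then yields a constant $C_2$ with $\dist(y, Y^*) \le C_2(q^* - q(y))$ for $y\ge 0$.

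To finish, I would use the identity $L(x,y)-q(y) = (p^T x - P^*) + (q^* - q(y)) + y^T(Gx-h)$ together with the LP factorization $Z^* = X^*\times Y^*$ (valid for LP saddle points). A short case analysis on whether $x$ is primal feasible shows that $r\,\rho_r(z)$ dominates a positive constant times $(p^T x - P^*)^+ + \|(Gx-h)^+\|_1 + (q^*-q(y))$, which by the two Hoffman bounds dominates a positive constant times $\dist(z,Z^*)$, as desired. The only delicate case is $x$ infeasible with $p^T x < P^*$, where one uses Hoffman once more to bound $|p^T x - P^*|$ by a multiple of $\|(Gx-h)^+\|_1$ and absorb it into the $r\|(Gx-h)^+\|_1$ term (possible because $r > \mu$ always).

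The principal obstacle is the dual-side Hoffman bound: the KKT-based argument of \cite{applegate21b} breaks because the dual here is piecewise linear rather than linear, so no single linear system characterizes $Y^*$ and $q$ is not everywhere differentiable. The observation that rescues the argument is that $q$ has only finitely many linearity regions, one per sign pattern of $p_w + G_w^T y$, so $Y^*$ is a polyhedron cut out by finitely many halfspaces and classical Hoffman applies. A secondary bookkeeping issue is the uniform removal of the $\min(\cdot,r)$ caps in the coordinate formula for $r\,\rho_r(z)$, which is exactly where the ambient bound $z\in B_R(0)$ enters in an essential way and produces the joint dependence of $\alpha$ on both $r$ and $R$.
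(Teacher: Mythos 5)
Your proposal is correct and follows essentially the same route as the paper: both evaluate the suprema in $r\,\rho_r(z)$ coordinatewise, split the result into a primal residual controlled by a Hoffman/polyhedral error bound on the optimal face $X^*$ and a dual residual controlled by the sharpness of the piecewise-linear dual function (whose superlevel set is a finite intersection of halfspaces), with the hypothesis $z\in B_R(0)$ entering only to neutralize the radius caps. The only execution differences are that the paper removes the caps by first taking $r$ large and then invoking monotonicity of $\rho_r$ in $r$, and packages your feasibility case analysis into an exact-penalty lemma; note also that absorbing $P^*-p^Tx$ into $r\|(Gx-h)^+\|_1$ in your delicate case requires $r>\mu\|p\|_1/\gamma$ rather than merely $r>\mu$, which is harmless since you may shrink $\mu$.
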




Without loss of generality, we can assume the norm used in $B_r(z)$ is the infinity norm in $x$ and in $y$, i.e., $B_r(z)=\{(\tx,\ty): \|x-\tx\|_{\infty} \le r, \|y-\ty\|_{\infty}\le r\}$, because all norms are equivalent up to a constant in a finite Euclidean space. In other word, the existence of such $\alpha>0$ in Theorem \ref{prop:sharp} keeps valid with a different choice of norm, although the constant $\alpha$ may vary with a different choice of the norm. Similarly, we define the norm in the primal space and in the dual space as $\ell_{\infty}$ norm.

To prove the theorem, we utilize the following two lemmas.

\begin{lemma}\label{lem:gamma}
Denote $\mF=\{0\le x \le e:Gx\le h\}$ as the feasible region of \eqref{eq:problem_primal}. Suppose $\mF$ is non-empty, then there exists $\gamma>0$ such that it holds for any $0\le x\le e$ that
$$
\|(Gx-h)^+\|_1\ge \gamma \text{dist}(x,\mF) \ ,
$$
where the distance (\textit{dist}) is defined with $\ell_{\infty}$ norm and $x\in [0,1]^n$.
\end{lemma}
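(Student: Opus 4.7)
The plan is to derive this as a direct application of Hoffman's classical error bound for polyhedra. Recall that Hoffman's lemma says that for any nonempty polyhedron $P = \{x : Ax \le b\}$, there is a constant $c(A) > 0$ (depending only on $A$, not on $b$) such that for every $x$,
\[
\dist(x, P) \le c(A) \, \|(Ax - b)^+\| \ ,
\]
where the distance and residual norms can be chosen freely, with $c(A)$ adjusted accordingly.

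First, I would rewrite $\mF$ as a single system $\tilde{A} x \le \tilde{b}$, by stacking the coefficient matrix and right-hand side,
\[
\tilde{A} = \begin{pmatrix} G \\ I \\ -I \end{pmatrix}, \qquad
\tilde{b} = \begin{pmatrix} h \\ e \\ 0 \end{pmatrix} \ .
\]
Since $\mF$ is assumed nonempty, Hoffman's lemma applied to this system (with $\ell_\infty$ distance on the domain and $\ell_1$ on the residual, both finite-dimensional and hence equivalent to any other choice) gives a constant $c > 0$ such that for every $x \in \mathbb{R}^n$,
\[
\dist(x, \mF) \le c \, \|(\tilde{A} x - \tilde{b})^+\|_1 \ .
\]

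Next, I would exploit the assumption that the input $x$ already lies in the box $[0,1]^n$. For such $x$, the rows of $\tilde{A} x - \tilde{b}$ coming from $I x \le e$ satisfy $x_i - 1 \le 0$, and the rows from $-I x \le 0$ satisfy $-x_i \le 0$. Hence the positive parts of these coordinates vanish, and
\[
\|(\tilde{A} x - \tilde{b})^+\|_1 = \|(G x - h)^+\|_1 \ .
\]
Combining the two displays and setting $\gamma := 1/c$ yields the claimed inequality.

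Essentially the only subtlety is the choice of norms: Hoffman's lemma produces a Hoffman constant that depends on which norm one uses for the distance and for the residual, but since we are in a finite-dimensional space all norms are equivalent, so the required $\gamma>0$ exists for the specific $\ell_\infty$-distance and $\ell_1$-residual pairing stated in the lemma. The main ``obstacle'' is simply recognizing that the box constraints can be absorbed into the polyhedron defining Hoffman's bound; once that is done the proof is a one-line reduction. I would cite Hoffman's original bound (or a textbook version, e.g., in \citealt{bertsimas08}) rather than reprove it.
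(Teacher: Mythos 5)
Your proof is correct, but it takes a different route from the paper's. The paper defines the auxiliary function $f(x)=\|(Gx-h)^+\|_1+1_{0\le x\le e}$, observes that it is a convex piecewise-linear function whose minimum value is $0$ with minimizer set exactly $\mF$, and then invokes the general fact that polyhedral convex functions are sharp (admit weak sharp minima), which yields the bound in one step. You instead absorb the box constraints into a single stacked system $\tilde{A}x\le\tilde{b}$, apply Hoffman's error bound to that system, and note that for $x$ already in $[0,1]^n$ the residuals of the box rows vanish, so the full residual reduces to $\|(Gx-h)^+\|_1$. The two arguments are close cousins --- the sharpness of the polyhedral function $\|(\tilde{A}x-\tilde{b})^+\|_1$ is essentially Hoffman's bound in disguise --- but your version has the advantage of resting on a single classical, citable theorem and making explicit where the constant $\gamma$ comes from, whereas the paper's version leans on an unproved (though standard) ``piecewise linear implies sharp'' assertion that it reuses elsewhere in the appendix. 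Your handling of the norm-equivalence issue and of the vanishing box residuals is exactly right, and the restriction of the claim to $x\in[0,1]^n$ is precisely what makes the reduction a one-liner. One clarifying remark: the paper elsewhere states that extending the Hoffman-constant analysis of \cite{applegate21b} to the two-sided setting is very difficult, but that caveat concerns the sharpness of the full primal-dual saddle-point problem (Theorem 3), not this primal feasibility lemma, so there is no tension with your approach here.
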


\begin{proof}
Define $f(x)=\|(Gx-h)^+\|_1+1_{0\le x\le e}$ where $1_{0\le x\le e}=\left\{\begin{array}{cc}
    0 & \text{ if } 0\le x\le e \\
    \infty & \text{ otherwise}
\end{array}\right.$ is the indicator function of the set $[0,1]^n$. Since $\mF$ is non-empty, we know that $f^*=\min_{x} f(x)=0$ and the optimal solution set to this function $f(x)$ is $\mF$. 
Furthermore, notice that $f(x)$ is a piecewise linear function, thus it is a sharp function, namely there exists $\gamma>0$ such that for any $0\le x\le e$
\begin{equation}
\|(Gx-h)^+\|_1=f(x) = f(x)-f^*\ge \gamma \textbf{\textit{\text{dist}}}(x,\mF) \ ,
\end{equation}
which finishes the proof.

\end{proof}

\begin{lemma}\label{lem:beta}
Suppose $r > \frac{1}{\gamma}\|p\|_1$, then there exists $\beta>0$ such that it holds for any $0\le x\le e$ that
$$
p^T x +r\|(Gx-h)^+\|_1-P^* \ge \beta \dist(x, X^*) \ ,
$$
where $P^*$ is the optimal value to the primal problem \eqref{eq:problem_primal} and $X^*$ is the optimal solution set to \eqref{eq:problem_primal}.
\end{lemma}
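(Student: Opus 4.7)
\textbf{Proof proposal for Lemma \ref{lem:beta}.}

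The plan is to use Lemma \ref{lem:gamma} to project any infeasible point in the box onto $\mF$, and then separately invoke sharpness of the linear program restricted to the feasible polytope. Concretely, given $x\in[0,1]^W$, I would pick $\hat{x}\in\mF$ achieving $\|x-\hat{x}\|_\infty=\dist(x,\mF)$; Lemma \ref{lem:gamma} guarantees $\|x-\hat{x}\|_\infty\le \frac{1}{\gamma}\|(Gx-h)^+\|_1$. Hölder's inequality then gives $p^T x\ge p^T\hat{x}-\|p\|_1\|x-\hat{x}\|_\infty\ge p^T\hat{x}-\frac{\|p\|_1}{\gamma}\|(Gx-h)^+\|_1$, so
\begin{equation*}
p^T x+r\|(Gx-h)^+\|_1-P^*\ \ge\ \bigl(p^T\hat{x}-P^*\bigr)+\Bigl(r-\tfrac{\|p\|_1}{\gamma}\Bigr)\|(Gx-h)^+\|_1 \ .
\end{equation*}
By hypothesis the coefficient $c_1:=r-\|p\|_1/\gamma$ is strictly positive, and since $\hat{x}\in\mF$ the term $p^T\hat{x}-P^*\ge 0$.

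The second ingredient is sharpness of the LP on the feasible polytope itself. The feasible region $\mF$ is a bounded polytope (it sits inside $[0,1]^W$), $X^*$ is the face $\{x\in\mF:p^T x=P^*\}$, and $x\mapsto p^T x - P^*$ is a (piecewise) linear convex function on $\mF$ whose zero set is exactly $X^*$. Standard polyhedral error bounds (Hoffman's lemma, or equivalently sharpness of piecewise-linear convex functions on a bounded polyhedron, as already invoked in the proof of Lemma \ref{lem:gamma}) therefore yield a constant $\beta_1>0$ such that
\begin{equation*}
p^T\hat{x}-P^*\ \ge\ \beta_1\,\dist(\hat{x},X^*)\quad\text{for every }\hat{x}\in\mF \ .
\end{equation*}

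To finish I would apply the triangle inequality $\dist(x,X^*)\le\|x-\hat{x}\|_\infty+\dist(\hat{x},X^*)\le \frac{1}{\gamma}\|(Gx-h)^+\|_1+\frac{1}{\beta_1}(p^T\hat{x}-P^*)$. Setting $\beta:=\min\{\gamma c_1,\beta_1\}=\min\{r\gamma-\|p\|_1,\beta_1\}>0$ and multiplying through gives
\begin{equation*}
\beta\,\dist(x,X^*)\ \le\ c_1\|(Gx-h)^+\|_1+(p^T\hat{x}-P^*)\ \le\ p^T x+r\|(Gx-h)^+\|_1-P^* \ ,
\end{equation*}
which is the desired bound.

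I expect the main obstacle to be the sharpness step $p^T\hat{x}-P^*\ge \beta_1\dist(\hat{x},X^*)$. It is tempting to write this off as ``obvious from piecewise linearity,'' but some care is needed: one must argue on the bounded polytope $\mF$ (so that the constant $\beta_1$ is uniform) and keep the norm consistent with the $\ell_\infty$ distance used elsewhere in the theorem. The cleanest route is to mimic the indicator-function trick used in Lemma \ref{lem:gamma}: define $g(x)=(p^T x-P^*)+\mathbf 1_{\mF}(x)$, observe that $g$ is a proper piecewise linear convex function with $\min g=0$ and $\arg\min g=X^*$, and apply the standard result that such functions are sharp with some modulus $\beta_1>0$ on any bounded set containing $\mF$. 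Everything else is an elementary combination of Hölder's inequality, the projection bound from Lemma \ref{lem:gamma}, and the triangle inequality.
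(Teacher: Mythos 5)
Your proof is correct, but it takes a genuinely different route from the paper's. The paper argues via exact penalization: it considers $f_r(x)=p^Tx+r\|(Gx-h)^+\|_1+1_{0\le x\le e}$ and shows, using Lemma \ref{lem:gamma} and the hypothesis $r>\|p\|_1/\gamma$, that any minimizer of $f_r$ must lie in $\mF$ (otherwise replacing it by its nearest feasible point strictly decreases $f_r$), so that $f_r^*=P^*$ and $\arg\min f_r\subseteq X^*$; it then invokes sharpness of the piecewise-linear function $f_r$ in a single stroke. You instead decompose the error at an arbitrary $x$ into a feasibility part and an optimality part: project $x$ onto $\mF$, control $\|x-\hat x\|_\infty$ via Lemma \ref{lem:gamma}, control $\dist(\hat x,X^*)$ via sharpness of the LP restricted to the bounded polytope $\mF$, and glue the pieces together with H\"older and the triangle inequality. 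Both arguments rest on the same two ingredients (Lemma \ref{lem:gamma} and the fact that a piecewise-linear convex function is sharp on a bounded set), and the threshold $r>\|p\|_1/\gamma$ plays the identical role in each. What your version buys is a more explicit constant, $\beta=\min\{r\gamma-\|p\|_1,\beta_1\}$, expressed through the sharpness modulus $\beta_1$ of the unpenalized LP on $\mF$, which in particular makes transparent that $\beta$ does not degrade as $r$ grows; what the paper's version buys is brevity, needing only one invocation of the sharpness fact and no triangle-inequality bookkeeping. The step you flag as the main obstacle, $p^T\hat x-P^*\ge\beta_1\dist(\hat x,X^*)$ for $\hat x\in\mF$, is justified at exactly the level of rigor the paper itself applies to $f_r$, so it is not a gap relative to the paper's standard.
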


\begin{proof}
Consider a function $f_r(x)=p^T x+r\|(Gx-h)^+\|_1+1_{x\in [0,1]^n}$. 
For $x^1\in\arg\min f_r(x)$, suppose $x^1\not\in \mF$. Denote $x^2=\arg\min_{x\in \mF} \|x^1-x^2\|_{\infty}$. Then it follows
from Lemma \ref{lem:gamma} that
$$\|(Gx-h^1)^+\|_1 \ge \gamma \|x^1-x^2\|_{\infty} \ . $$
Notice that $r > \frac{1}{\gamma} \|p\|_1$, thus we have
$$
p^T x^1 + r\|(Gx-h^1)^+\|_1 >  p^T x^1 + \|p\|_1 \|x^1-x^2\|_{\infty}\ge p^T x^2\ .
$$
Thus $f_r(x^2)=p^T x^2 < f_r(x^1)$, which contradicts with the fact that $x^1\in\arg\min f_r(x)$. This shows that any minimizer $x^1$ to $f_r(x)$ must satisfy $Gx^1\le h$, thus $x^1$ is a feasible solution to \eqref{eq:problem_primal}, whereby it is a minimizer to the primal problem \eqref{eq:problem_primal} and $f_r^*=P^*$. Let $X^*_r$ be the set of minimizers of $f_r(x)$, then the above argument implies that $X^*_r\subseteq X^*$.

Notice that $f_r(x)$ is a piecewise linear function thus it is a sharp function, namely, there exists $\beta>0$ such that
$$
f_r(x)-f_r^* \ge \beta \dist(x, X_r^*) \ .
$$
Substituting $f_r(x)=p^T+r\|(Gx-h^1)^+\|_1+1_{x\in [0,1]^n}$ and $f_r^*=P^*$, we have for any $x\in [0,1]^n$ that
$$
p^T x+r\|(Gx-h^1)^+\|_1-P^* \ge \beta \dist(x, X_r^*)\ge \beta\dist(x, X^*)\  ,
$$
which finishes the proof.


\end{proof}

\begin{proof}[Proof of Theorem \ref{prop:sharp}]

First, we consider the case when $r\ge \{\gamma\|p\|, R, 1\}$, then we have
\begin{align}\label{eq:middle}
    r\rho_r(z) &= \max_{\ty\in B_r(y)\cap Y} L(x,\ty) - \min_{\tx\in B_r(x)\cap X} L(\tx,y) \\
    &= \max_{\ty\in B_r(y)\cap Y} \ty^T (Gx-h) + p^T x - \min_{\tx\in B_r(x)\cap X} \pran{\tx^T (p+G^T y) - h^T y} \\
    &\ge r  \|(Gx-h)^+\|_1 + p^T x - \pran{\|(p+G^T y)^+\|_1- h^T y}\\
    & = \underbrace{ r  \|(Gx-h)^+\|_1 + p^T x - p^T x^*}_{I} + \underbrace{\|(p+G^T y^*)^+\|_1- h^T y^* - \|(p+G^T y)^+\|_1+ h^T y}_{II}\ .
\end{align}
The first part in the first equality uses $r\ge R \ge \|y\|_{\infty}$, thus an optimal solution to the maximization problem is $\ty^*_i=\left\{ \begin{array}{cc}
     0 & G_i x-h_i\le 0 \\
     y_i+r & G_i x-h_i > 0
\end{array}\right. \ ,$ whereby the optimal objective value is larger than $r \|(Gx-h)^+\|_1 + p^T x$. The second part in the first inequality uses $r\ge 1$, thus an optimal solution to the minimization problem is $\tx^*_i=\left\{ \begin{array}{cc}
     0 & p_i + G_{i:}^T y\le 0 \\
     1 & p_i + G_{i:}^T y >0
\end{array}\right. \ ,$ whereby the optimal objective value is $ \|(p + G^T y)^+\|_1-h^T y$. The last equality uses strong duality on the primal-dual pair \eqref{eq:problem_primal} and \eqref{eq:problem_dual}, thus $p^T x^*=\|(p+G^T y^*)^+\|_1- h^T y^*$.

It then follows from Lemma \ref{lem:beta} that 
$$
I \ge \beta \dist(x, X^*) \ .
$$
Furthermore, notice that the dual function $D(y):=\|(p+G^T y)^+\|_1- h^T y$ is a piecewise linear function, and $Y^*$ is the optimal solution set to $\max_{y\ge 0} D(y)$, thus there exists $\theta>0$ such that 
$$
II \ge \theta \dist(y, Y^*) \ .
$$
Substituting the above two inequalities to \eqref{eq:middle}, we show that 
$$
\rho_r(z) \ge \frac{1}{r}\min(\beta, \theta) \dist(z, Z^*).
$$

The above shows that $\rho_r(z)$ is $\frac{1}{r}\min(\beta,\theta)$ sharp for $r\ge \{\gamma\|p\|_1, R, 1\}$. For the case when 
$r< \{\gamma\|p\|_1, R, 1\}$, notice that $\rho_r(z)$ is a monotonic non-increasing function in $r$ (Fact 2 in \citealt{applegate21b}), we know that $\rho_r(z)$ is sharp with $\alpha=\frac{1}{\{\gamma\|p\|_1, R, 1\}}\min(\beta,\theta)$. This finishes the proof.
\end{proof}

\begin{proof}[Proof of Theorem \ref{thm:twoloop}]
Theorem \ref{prop:sharp} shows the primal-dual problem \eqref{eq:problem_primal_dual} is a sharp problem on any bounded region. Then, applying Theorem 1 and Theorem 2 in \cite{applegate21b}, we finish the proof of Theorem \ref{thm:twoloop}.
\end{proof}



\subsection{Profit Estimation and Prediction}\label{subsec:targetingvariables}

There exist many models that we can use to achieve the profit prediction. Comparing the differences between different models is not the goal of our paper, and we choose to use LASSO with a full set of interactions to predict the profit given recommendations from literature (e.g., \citealt{athey17} and \citealt{prospective20}). We use all available training data in this step ($N=2,455,727$). The estimation function we consider is: 
\begin{equation}
    P_{i}^{j} = f(W_i^j,O_i) + \epsilon_i =  \alpha + \beta W_{i}^j + \gamma O_{i} + \delta W_{i}^j O_{i} + \epsilon_{i}.
    \label{eq:lasso}
\end{equation}
Here, $P_i^j$ denotes the profit earned from customer $i$ if it receives marketing action $j = 0,1,...,5$, $W_i^j$ denotes the marketing action $j = 0,1,...,5$ treated on customer $i$, $O_i$ denotes all of the covariates (contextual variables) of customer $i$. $j=0$ indicates the no-action control condition. $W_i^jO_i$ is a full interaction of treatment $W_i^j$ with all covariates $O_i$. LASSO can help to select which covariates are important. Once we have an estimated model $\hat{f}(W_i^j,O_i), j=0, 1,...,5$, we can then derive predicted profit for each customer under each marketing action $\hat{P}_i^j = \hat{f}(W_i^j = 1, W_i^{j^{\prime}}=0 ,O_i)$ for $j = 0,1,...,5$ and $j^{\prime} \neq j$. In Problem (\ref{eq:problem_setup}), $p_i^j$ is the incremental profit that the firm earns from customer $i$ if it receives marketing action $j=1,2,3,4,5$. The benchmark condition is $j=0$. Thus, we derive $p_i^j$ by calculating $\hat{P}_i^j - \hat{P}_i^0$ for $j = 1,2,3,4,5$. We can also consider directly estimating the incremental profit in Equation (\ref{eq:lasso}). We simplify the estimation and prediction step to focus on the optimization step for targeting with constraints problem. In Section \ref{sec:conclusion}, we discuss several interesting directions to think about the interaction between prediction and optimization. 


Table \ref{table:summary} reports the summary statistics for all of the targeting variables ($O_i$) we use in the estimation and prediction model. The meanings of all variables are as follows. 
\onehalfspacing
\begin{itemize}
    \item \textit{Single Family} and \textit{Multi Family} are binary flags indicating whether the household's home is a single or multi-family home. 
    \item \textit{Member Tier} is a tier assigned to each customer by the retailer. Lower tiers indicate higher potential values. There are 10 tiers in total, and we use binary flags for the first 9 tiers in the profit prediction. 
    \item \textit{Child} is a binary flag indicating whether the household includes one or more children. 
    \item \textit{Female} and \textit{Male} are binary flags indicating whether the head of the household is female or male. There are households for which we do not observe the gender of the household head. 
    \item \textit{Home Value Tier} is a tier classification of the household's estimated home value.  Higher tiers indicate higher values and households that do not have estimated home values are in tier 11.  There are 11 tiers in total, and we use binary flags for the first 10 tiers in the profit prediction. 
    \item \textit{Family Number} is the number of people living in the household. 
    \item \textit{Length of Residence} is the length of time living in the current home. 
    \item \textit{Income} is the estimated household income. 
    \item \textit{Age} is the age of the head of the household. 
    \item \textit{Age Type} is a binary flag indicating whether the age is estimated. 
    \item \textit{Homeowner}, \textit{Renter} and \textit{Condo Owner} are each binary flags indicating whether the household is a homeowner, renter or condo owner. 
    \item \textit{Residential}, \textit{Condominium}, \textit{Duplex}, \textit{Apartment}, \textit{ Agricultural} and \textit{Mobile Homes} are binary flags indicating the property type. 
    \item \textit{Distance} is the distance from the household to the retailer's nearest store. 
    \item \textit{Comp. Distance} is the distance from the household to the nearest competitor's store. 
    \item \textit{3yr Response} is the average response rate over the last 3 years to the retailer's prospecting marketing activities (in that carrier route). 
    \item \textit{Penetration Rate} is the percentage of households in that five-digit zip code that are already members of the retailer. 
    \item \textit{F Flag} is a binary flag indicating whether the retailer considers the household's ZIP code as ``far'' from the retailer's nearest store. 
    \item \textit{M Flag} is a binary flag indicating whether the retailer considers the household's ZIP code to be a ``medium distance'' from the retailer's nearest store.
\end{itemize}
\doublespacing

\onehalfspacing
\begin{table}[H]
\centering
\begin{threeparttable}
\caption{Summary Statistics of Targeting Variables.} \vspace{.2in}
 \small
\begin{tabular}{ l c c c }
 \hline\hline
 Variable & Mean & Median & Standard Deviation \\
 \hline

 Single Family & 0.808 & 1.000 & 0.394 \\
 Multi Family & 0.188 & 0.000 & 0.391\\
 Member Tier & 5.365 &  5.000 & 2.476\\
 Child & 0.231 & 0.000 & 0.422\\
 Female & 0.317 & 0.000 & 0.465\\
 Male & 0.658 & 1.000 & 0.474 \\
 Home Value Tier& 4.071 & 2.000 & 3.506 \\
 Family Number & 2.491 & 2.000 & 1.691\\
 Length of Residence & 11.94 & 8.000 & 11.68\\
 Income (in 1,000s) & 63.83 & 50.00 & 52.46\\
 Age & 50.03 & 50.00 & 16.96\\
 Age Type & 0.824 & 1.000 & 0.381\\
 Homeowner & 0.663 & 1.000 & 0.473\\
 Renter & 0.243 & 0.000 & 0.429\\
 Condo Owner & 0.033 & 0.000 & 0.179 \\
 Residential & 0.682 & 1.000 & 0.466 \\
 Condominium & 0.305 & 0.000 & 0.172\\
 Duplex & 0.025 & 0.000 & 0.157 \\
 Apartment & 0.002 & 0.000 & 0.047 \\
 Agricultural & 0.009 & 0.000 & 0.096\\
 Mobile Homes & 0.025 & 0.000 & 0.155 \\
 Distance & 10.84 & 8.095 & 8.183 \\
 Comp. Distance & 9.240 & 6.433 & 7.664\\
 3yr Response & 0.121 & 0.994 & 0.205 \\
 Penetration Rate & 0.093 & 0.069 & 0.071\\
 F Flag & 0.590 & 1.000 & 0.492\\
 M Flag & 0.280 & 0.000 & 0.449\\
 \hline\hline
\end{tabular}
\label{table:summary}
\end{threeparttable}
\end{table}
\doublespacing

\newpage

\subsection{Varying the Number of Customers: Alternative Hardware Options}\label{subsec:numberhouseholds}
\begin{table}[h]
\onehalfspacing
\centering
\begin{threeparttable}
\caption{Varying the Number of Customers: Alternative Hardware Options}\vspace{.2in}
\begin{tabular}{ l c c c c} 
 \hline\hline
 & Primal Simplex & Dual Simplex  & Barrier & Our Algorithm \\
 \hline
 \textbf{$H_1$ Hardware Specification}\\
  Full Sample ($I=2,065,758$) & - & - & - & -\\
 75\% Sample ($I=1,549,323$) &   - & - & - & - \\
  50\% Sample ($I=1,032,884$)  &  - & - & - & - \\ 
  25\% Sample ($I=516,435$)  & * & * & * & 10,780\\
 \hline
 \textbf{$H_2$ Hardware Specification}\\
  Full Sample ($I=2,065,758$) & - & - & - & -\\
 75\% Sample ($I=1,549,323$) &   - & - & - & 275,000 \\
  50\% Sample ($I=1,032,884$)  &  * & * & * & 16,870 \\ 
  25\% Sample ($I=516,435$)  & 47,956 & 10,318 & * & 5,345\\
 \hline\hline
\end{tabular}
\begin{tablenotes}
\footnotesize
\item \singlespacing \vspace{-.1in} Notes. The table reports the total computation time (in seconds) used by each method to solve each instance of the IPwC problem, when varying both (a) the proportion of customers included in each problem, and (b) the hardware specifications. If the method cannot solve an instance of the problem, the table reports either ``-'' indicating it ran out of memory, or ``*'' indicating it ran out of time. 
\end{tablenotes}
\label{table:varyIH1}
\end{threeparttable}
\end{table}
\doublespacing

\newpage
\subsection{Varying the Number of Segments: Alternative Hardware Options}\label{subsec:numbersegments}
\begin{table}[h]
\onehalfspacing
\centering
\begin{threeparttable}
\caption{Varying the Number of Segments: Alternative Hardware Options}\vspace{.2in}
\begin{tabular}{ l c c c c} 
 \hline\hline
 & Primal Simplex & Dual Simplex  & Barrier & Our Algorithm \\
 \hline
 \textbf{$H_1$ Hardware Specification}\\
  Five-digit zip codes ($K=229$) & - & - & - & - \\
  Four-digit zip codes ($K=87$) &   - & - & - & - \\
  Three-digit zip codes ($K=18$)   &  94,500 & 1,567 & 13,464 & 1,394 \\ 
 \hline
 \textbf{$H_2$ Hardware Specification}\\
  Five-digit zip codes ($K=229$) & - & - & - & - \\
  Four-digit zip codes ($K=87$) &   * & * & * & 13,870 \\
  Three-digit zip codes ($K=18$)   &  84,016 & 1,477 & 12,509 & 777 \\ 
 \hline\hline
\end{tabular}
\begin{tablenotes}
\footnotesize
\item \singlespacing \vspace{-.1in} Notes. The table reports the total computation time (in seconds) used by each method to solve each instance of the IPwC problem, when varying both (a) the coarseness of the customer segmentation, and (b) the hardware specifications. If the method cannot solve an instance of the problem, the table reports either ``-'' indicating it ran out of memory, or ``*'' indicating it ran out of time. 
\end{tablenotes}
\label{table:varyIH1}
\end{threeparttable}
\end{table}
\doublespacing

\subsection{Parameters Used for the Constraints Implemented in Section \ref{sec:validation}}\label{app:parameters}

\onehalfspacing
\noindent \textbf{Volume I} 

\noindent The firm imposes requirements on the minimum and maximum number of customers in customer $k$ for marketing action $j$. Specifically, we set these requirements as: $a_k^1 = 0.3n_k$, $b_k^1 = 0.35n_k$, $a_k^2 = 0.05n_k$, $b_k^1 = 0.1n_k$, $a_k^3 = 0.05n_k$, $b_k^3 = 0.1n_k$, $a_k^4 = 0.3n_k$, $b_k^4 = 0.35n_k$, $a_k^5 = 0.05n_k$, and $b_k^5 = 0.1n_k$ for $k = 1,...,K$.

\medskip
\noindent \textbf{Volume II} 

\noindent The firm asks for a performance constraint that at least 70\% of the customers in segment $k$ receive the experimental marketing actions. This means that $L_k = 0.7n_k$, $U_k = \infty$, and $c_i^j=1$ for $i=1 ,...,I$, $j=1,...,5$ and $k=1,...,K$.

\medskip
\noindent \textbf{Similarity I} 

\noindent Under this constraint, if two customer segments are geographically closer, the proportion of customers that receive marketing action $j$ are more similar. In particular, when we define customer segments using five-digit zip codes, we set $g_j^{k_1k_2}=0$ for all $j=1,...,5$, $k_1=1,...,K, k_2=k_1+1,...,K$. We design $\lambda_j^{k_1k_2}$ using the following structure:
    \begin{itemize}
        \item When two segments have the same four-digit zip codes, $\lambda_j^{k_1k_2}=1.1$; 
        \item If two segments have different four-digit zip codes but the same three-digit zip codes, $\lambda_j^{k_1k_2}=1.2$; 
        \item If two segments have different three-digit zip codes, but are in the same state, $\lambda_j^{k_1k_2}=1.3$;
        \item If two segments are in different states, $\lambda_j^{k_1k_2}=1.4$. 
    \end{itemize}
    When we define customer segments using four-digit or three-digit zip codes, we apply the same definition for $\lambda_j^{k_1k_2}$ (if applicable). For example, if we define customer segments using four-digit zip codes, we start by asking whether the two customer segments have the same three-digit zip codes? If yes, then $\lambda_j^{k_1k_2}=1.2$.
    
\medskip
\noindent \textbf{Similarity II} 

\noindent For $\gamma^{k_1k_2}$ and $h^{k_1k_2}$, we use the same structure as we use for $\lambda_j^{k_1k_2}$ and $g_j^{k_1k_2}$. For $d_i^j$, we set $d_i^1 = 0.3$, $d_i^2 = 0.3$, $d_i^3 = 0.2$, $d_i^4 = 0.1$, and $d_i^5 = 0.1$ for $i = 1,..., I$.

\medskip
\noindent \textbf{Targeting} 

\noindent For all customers $i=1,...,I$, we set $M_i=1$, which means that each customer $i$ can receive at most one marketing action. 

\doublespacing



\end{document}